%%%%%%%%%%%%%%%%%%%%%%%%%%%%%%%%%%%%%%%%%%%%%%%%%%%%%%%%%%%%%%%%%%%%%%%%%%%%%%%
%%%%%%%%%%%%%%%%%%%%%%%%%%%%%%%%%%%%%%%%%%%%%%%%%%%%%%%%%%%%%%%%%%%%%%%%%%%%%%%
%%%%%%%%%%%%%%%%%%%%%%%%%%%%%%%%%%%%%%%%%%%%%%%%%%%%%%%%%%%%%%%%%%%%%%%%%%%%%%%
%%%%%%%%%%%                                               %%%%%%%%%%%%%%%%%%%%%
%%%%%%%%%%%                                               %%%%%%%%%%%%%%%%%%%%%
%%%%%%%%%%%                                               %%%%%%%%%%%%%%%%%%%%%
%%%%%%%%%%%%%%%%%%%%%%%%%%%%%%%%%%%%%%%%%%%%%%%%%%%%%%%%%%%%%%%%%%%%%%%%%%%%%%%
%%%%%%%%%%%%%%%%%%%%%%%%%%%%%%%%%%%%%%%%%%%%%%%%%%%%%%%%%%%%%%%%%%%%%%%%%%%%%%%
%%%%%%%%%%%%%%%%%%%%%%%%%%%%%%%%%%%%%%%%%%%%%%%%%%%%%%%%%%%%%%%%%%%%%%%%%%%%%%%

\documentclass{amsart} %%[reqno]{amsart}
\usepackage{amsmath,amsthm,amssymb,amsfonts,amscd}
\usepackage{xcolor}
\usepackage[dvips]{graphicx}
\usepackage[all]{xy}
\usepackage{xcolor}

%%% Start of the area for technical editor.
%%%\newcommand{\publname}{Department of Mathematics, Dankook University \qquad 2008.08.12\qquad Ed. Jae Gil Choi}
%%%\issueinfo{ }% volume number
%%%  { }%        % issue number
%%%  { }%        % month
%%%  { }%     % year
%%%\pagespan{0}{0}
%\received{Received June 5, 2006}

\newtheorem{theorem}{Theorem}[section]
\newtheorem{lemma}[theorem]{Lemma}
\newtheorem{corollary}[theorem]{Corollary}

\newtheorem{example}[theorem]{Example}

\theoremstyle{definition}
\newtheorem{definition}[theorem]{Definition}

%%%%%%%
%%%%%%%\newtheorem{remark}[theorem]{Remark}
%%%%%%%
%\theoremstyle{remark}
%%%%%%%

\newtheorem{remark}[theorem]{Remark}

\numberwithin{equation}{section}

%%%%%%%%%%%%%%%%%%%%%%%%%%%%%%%%
%%%%%%%%%%%%%%%%%%%%%%%%%%%%%%%%
%%%%%%%%%%%%%%%%%%%%%%%%%%%%%%%%
%%%%%%%%%%%%%%%%%%%%%%%%%%%%%%%%

\begin{document}

\title[Fourier--Feynman transforms and convolution type operations] 
{Analytic Fourier--Feynman transforms and convolution type operations 
associated with Gaussian processes  on Wiener space}

%%%%%%%%%%%%%%%%%%%%%%%%%%%%%%%%
%%%%%%%%%%%%%%%%%%%%%%%%%%%%%%%%
%%%%%%%%%%%%%%%%%%%%%%%%%%%%%%%%
%%%%%%%%%%%%%%%%%%%%%%%%%%%%%%%%

\author{Jae Gil Choi}
\address{Department of Mathematics, Dankook University, Cheonan 330-714, Korea}
\email{jgchoi@dankook.ac.kr}

\author{Seung Jun Chang$^*$}
\address{Department of Mathematics, Dankook University, Cheonan 330-714, Korea}
\email{sejchang@dankook.ac.kr}
   
\thanks{$^*$ Corresponding author.}

%%%%%%%%%%%%%%%%%%%%%%%%%%%%%%%%
%%%%%%%%%%%%%%%%%%%%%%%%%%%%%%%%
%%%%%%%%%%%%%%%%%%%%%%%%%%%%%%%%
%%%%%%%%%%%%%%%%%%%%%%%%%%%%%%%%

\subjclass[2000]{Primary 28C20, 60J65}
 
\keywords{Wiener space, 
Gaussian process, 
$\mathcal Z_h$-Fourier--Feynman transform,
convolution type operation}

\begin{abstract} 
In this paper we introduce the concept of a convolution type operation of functionals 
on Wiener space. It contains several kinds of the concepts of convolution products on  
Wiener space,  which have been studied by many authors. We then investigate fundamental 
relationships between generalized analytic  Fourier--Feynman transforms and  convolution type 
operations. Both of the generalized analytic Fourier--Feynman transform of  the convolution 
type operation and the convolution type operation of the  generalized analytic Fourier--Feynman 
transforms are represented as a product of the generalized analytic Fourier--Feynman transforms.
\end{abstract}

\maketitle

%%%%%%%%%%%%%%%%%%%%%%%%%%%%%%%%%%%%%%%%%%%%%%%%%%%%%%%%%%%%%%%%%%
%%%%%%%%%%%%%%%%%%%%%%%%%%%%%%%%%%%%%%%%%%%%%%%%%%%%%%%%%%%%%%%%%%
%%%-----------------------[section]----------------------------%%%
%%%%%%%%%%%%%%%%%%%%%%%%%%%%%%%%%%%%%%%%%%%%%%%%%%%%%%%%%%%%%%%%%%
%%%%%%%%%%%%%%%%%%%%%%%%%%%%%%%%%%%%%%%%%%%%%%%%%%%%%%%%%%%%%%%%%%
\setcounter{equation}{0}
\section{Introduction}\label{sec:into}

\par
For $f\in L_1(\mathbb R^n)$, let the Fourier transform $\mathcal F(f)$ 
of $f$ be given by
\[
\mathcal{F}(f)(\vec u)
=\int_{\mathbb R^n}e^{i\vec u\cdot \vec v}f(\vec v)dm_L^n(\vec v)
\]
where $dm_L^n (\vec v)$ denotes the normalized Lebesgue measure 
$(2\pi)^{-n/2}d\vec v$ on $\mathbb R^n$. Also, for $f, g\in L_1(\mathbb R^n)$, 
let the  convolution $f*g$ of $f$ and $g$ be given by
\[
(f*g)(\vec u)
=\int_{\mathbb R^n} f(\vec u-\vec v)g(\vec v)dm_L^n(\vec v).
\]
Then the Fourier transform $\mathcal F$ acts like a group homomorphism  
with convolution $*$ and ordinary multiplication  on $L_1(\mathbb R^n)$.
More precisely, one can see that  for $f, g \in L_1(\mathbb R^n)$
\[
\mathcal{F}(f*g)=\mathcal{F}(f)\mathcal{F}(g).
\]

\par
Let $C_0[0,T]$ denote one-parameter Wiener space; that is, the space of all  
real-valued continuous functions $x$ on $[0,T]$ with $x(0)=0$. Let $\mathcal{M}$ 
denote the class of all Wiener measurable subsets of $C_0[0,T]$ and let $m$ be the
Wiener measure.
Then $(C_0[0,T],\mathcal{M},m)$ is a complete measure space.
%%%% which is the cylinder set measure on $\mathcal B(C_0[0,T])$,
%%%%the Borel $\sigma$-field induced by the supremum norm on $C_0[0,T]$, such that 
%%%%for any set of time  moments $0= t_0 < t_1< \cdots<t_n \le T$ and any Borel set 
%%%%$B\subset \mathbb R^n$, the measure $m(I_{t_1,\ldots,t_n,B})$ of the cylinder set 
%%%%$I_{t_1,\ldots,t_n,B}$ of  the form 
%%%%\[
%%%%I_{t_1,\ldots,t_n,B} 
%%%%=\big\{\omega\in \Omega:(Y(t_1,\omega),\cdots,Y(t_n,\omega))\in B \big\}
%%%%\] 
%%%%is equal to
%%%%\[
%%%%\Big( (2\pi )^n \prod\limits_{j=1}^n(t_j - t_{j-1})\big) \Big)^{-1/2} 
%%%%\int_{B}\exp \bigg[- \frac 12 \sum\limits_{j=1}^n
%%%%\frac {( \eta_j-\eta_{j-1})^2}{t_j-t_{j-1}}\bigg]
%%%%d\eta_1\cdots d\eta_n. 
%%%%\]
%%%%Then $(C_0[0,T],\mathcal{M},m)$ is a complete measure space.
%%%%It is well-known that the class $\mathcal{M}$ coincide with the $\sigma$-field 
%%%%$\sigma(\mathcal{B}(C_0[0,T]))$, the completion of $\mathcal{B}(C_0[0,T])$. 

\par 
A subset $B$ of $C_0[0,T]$ is said to be 
scale-invariant measurable (s.i.m.) (see \cite{JS79-I}) provided 
$\rho B\in \mathcal{M}$ for all $\rho>0$,
and a scale-invariant measurable set $N$ 
is said to be scale-invariant null provided  
$m(\rho N)=0$ for all $\rho>0$.
A property that holds except on  a scale-invariant  
null set is said to be hold  scale-invariant almost 
everywhere (s-a.e.).
If two functionals $F$ and $G$ are equal s-a.e., 
we write $F\approx G$.

%%%%%%%%%%%%%%%%%%%%%%%%
The concept of the analytic Fourier--Feynman transform(FFT) on the Wiener 
space $C_0[0,T]$, initiated by Brue \cite{Brue}, has been developed in 
the literature. This transform  and its properties are similar in many 
respects to the ordinary Fourier function transform. For an elementary 
introduction to the analytic FFT, see \cite{SS04} 
and the references cited therein.
First of all, we refer to \cite{SS04} for    the precise 
definitions and the notations of  
the analytic FFT and the convolution product(CP)
on the  Wiener space $C_0[0,T]$. In \cite{HPS95}, Huffman, 
Park and Skoug defined a CP for functionals on $C_0[0,T]$, 
and they then obtained various results for the  analytic FFT
and the  CP \cite{HPS96,HPS97-1,HPS97-2}.  
In previous researches involving \cite{CCKSY05,HPS95,HPS96,HPS97-1,HPS97-2}, 
the authors have been established the relationship between the analytic 
FFT and the corresponding CP of 
functionals $F$ and $G$ on $C_0[0,T]$, in the form 
%%----------------------
\begin{equation}\label{eq:intro}
T_q\big((F*G)_q\big)(y)
=
T_q(F)\bigg(\frac{y}{\sqrt2}\bigg)T_q(G)\bigg(\frac{y}{\sqrt2}\bigg) 
\end{equation}
for s-a.e. $y \in C_0[0,T]$.

An essential  structure  hidden in the proof of equation \eqref{eq:intro}
is based on the fact that the Gaussian processes 
\[
\mathfrak Z_{+} \equiv \bigg\{\frac{x_1+x_2}{\sqrt2}: x_1,x_2\in C_0[0,T]\bigg\}
\mbox{ and }
\mathfrak Z_{-} \equiv \bigg\{\frac{x_1-x_2}{\sqrt2}: x_1,x_2\in C_0[0,T]\bigg\}
\]
are independent, and the processes $\mathcal Z_{+}$ and $\mathcal Z_{-}$
are equivalent to the standard  Wiener process. More precisely, the product 
Wiener measure $m\times m$ is rotation invariant in $C_{0}^2[0,T]$, 
see \cite[Lemmas 1 and 2]{CS76-2}. As discussed in \cite{CS76-2}, those 
rotation invariant properties of $m\times m$ were concretely realized by
Bearman \cite{B52}.

\par
Recently in  \cite{CSC12}, the authors used other rotation form of 
Wiener measure $m$  to defined a multiple  \emph{analytic FFT
associated with nonstationary Gaussian processes $\mathcal Z_h$}($\mathcal Z_h$-FFT) 
on $C_{0}[0,T]$. The rotation form used in \cite{CSC12} is a generalization of Bearman's  
celebrated result  and is intended to interpret   behaviors of nonstationary 
Gaussian processes on $C_0[0,T]$.  The authors also investigated various relationships 
which exist between the multiple FFT  and the 
corresponding CP associated with  nonstationary Gaussian 
processes on $C_{0}[0,T]$.

\par 
In this paper, motivated by the results in  \cite{CCKSY05,HPS95,HPS96,HPS97-1,HPS97-2}, 
we shall study the relationship between fundamental relationships between analytic 
$\mathcal Z_h$-FFTs and convolution type operations(CTO) on Wiener space.
The paper is organized as follows. In Section \ref{sec:pre} we briefly recall 
well-known results in Gaussian processes on Wiener space and give the concepts
of the $\mathcal Z_h$-FFT and the convolution type operation of functionals 
on Wiener space. In Section \ref{sec:E-space}, we emphasize the main purpose of 
this paper via specific examples. To do this we introduce the partially exponential 
type functionals  on $C_0[0,T]$. In Section \ref{sec:rotation}, as preliminary 
results, we investigate rotation properties of the Gaussian processes on Wiener space.
In Section  \ref{sec:FFT+COP}, we investigate  fundamental relationships between analytic 
$\mathcal Z_h$-FFTs and CTOs. Finally in Section \ref{sec:example}, we give several 
examples to apply our  assertions in this paper.

%%%%%%%%%%%%%%%%%%%%%%%%%%%%%%%%%%%%%%%%%%%%%%%%%%%%%%%%%%%%%%%%%%
%%%%%%%%%%%%%%%%%%%%%%%%%%%%%%%%%%%%%%%%%%%%%%%%%%%%%%%%%%%%%%%%%%
%%%-----------------------[section]----------------------------%%%
%%%%%%%%%%%%%%%%%%%%%%%%%%%%%%%%%%%%%%%%%%%%%%%%%%%%%%%%%%%%%%%%%%
%%%%%%%%%%%%%%%%%%%%%%%%%%%%%%%%%%%%%%%%%%%%%%%%%%%%%%%%%%%%%%%%%%
\setcounter{equation}{0}
\section{Preliminaries}\label{sec:pre}

%%%%%%%%%%%%%%%%%%%%%%%%%%%%%%%%%%%%%%%%%%%%%%%%%%%%%%%%%%%%%%%%%%
%%%-----------------------[section]----------------------------%%%
%%%%%%%%%%%%%%%%%%%%%%%%%%%%%%%%%%%%%%%%%%%%%%%%%%%%%%%%%%%%%%%%%%
%%\setcounter{equation}{0} 
%%\section{Gaussian process on $C_0[0,T]$}\label{sec:Gaussian-z}

For each $v\in L_2[0,T]$  and $x\in C_{0}[0,T]$,  we let 
%%--------------------------------
$\langle{v,x}\rangle=\int_0^T v(t)d x(t)$
%%--------------------------------
denote the Paley--Wiener--Zygmund  stochastic integral \cite{JS81,PWZ33,PS88}.
For any $h\in L_2[0,T]$ with $\|h\|_2>0$, let  $\mathcal{Z}_h$ be the stochastic  
process \cite{CPS93,HPS97-2,PS91,PS01} on $C_0[0,T]\times[0,T]$ given by 
\begin{equation}\label{eq:g-process}
\mathcal{Z}_h(x,t)
=\int_0^t h(s) d x(s)
=\langle{h\chi_{[0,t]},x}\rangle.
\end{equation}
Of course if $h(t)\equiv 1$ on $[0,T]$, 
then $\mathcal{Z}_h (x,t)=x(t)$ is an ordinary Wiener process.
It is known that the PWZ stochastic integral  $\langle{v,x}\rangle$ is Gaussian with 
mean zero and variance $\|v\|_2^2$, where $\|\cdot\|_2$ denotes the $L_2[0,T]$-norm.
Throughout this paper, we denote the Wiener integral of a Wiener 
measurable functional $F$ by 
\[
E[F]\equiv E_x[F(x)]=\int_{C_0[0,T]} F(x) dm(x).
\]

\par
Let  $\beta_h(t)=\int_0^th^2(u)du$.
It is easy to see that $\mathcal{Z}_h$ is a 
Gaussian process with mean zero and covariance 
function
\[
E_x[\mathcal{Z}_h(x,s)\mathcal{Z}_h(x,t)]
=\int_0^{\min\{s,t\}}h^2(u) du
=\beta({\min \{s,t\}}).
\]
In addition, $\mathcal{Z}_h (\cdot, t)$ is 
stochastically  continuous in $t$ on $[0,T]$ and   
for any  $h_1,h_2 \in  L_2[0,T]$,
\begin{equation}\label{eq:covh1h2}
E_x[\mathcal{Z}_{h_1}(x,s)\mathcal{Z}_{h_2}(x,t)]
=\int_{0}^{\min\{s,t\}}h_1(u)h_2(u) d u.
\end{equation}
Furthermore, if $h$ is of bounded variation on $[0,T]$,
then $\mathcal Z_h(x, t)$ is continuous in $t$ for all $x\in C_0[0,T]$,
i.e., $\mathcal Z_h(\cdot, \cdot)$ is a continuous process on $C_0[0,T]\times [0,T]$.
Thus throughout the remainder of this paper  we require $h$ to be in $BV[0,T]$,
the space of functions of bounded variation on $[0,T]$, for each process $\mathcal Z_h$.

%%%%\begin{remark}
It is known   \cite{CPS93} that for $v\in L_2[0,T]$ and $h\in L_{\infty}[0,T]$,
\begin{equation} \label{eq:Z-bsic-p}
\langle{v, \mathcal Z_h (x,\cdot)}\rangle =\langle{vh,x}\rangle
\end{equation}
for s-a.e. $x\in C_0[0,T]$.
%%%From this it follows that for all $h_1,h_2\in BV[0,T]$,
%%%\begin{equation}\label{eq:comps-Z}
%%%\begin{aligned}
%%%\mathcal Z_{h_2}(\mathcal Z_{h_1}(x,\cdot),t)
%%%&=\int_0^t h_2(s) d \bigg[\int_0^s h_1(u)dx(u)\bigg]\\
%%%&=\int_0^t h_2(s)  h_1(s)dx(s)
%%%=\mathcal Z_{h_2h_1}(x,t).
%%%\end{aligned}
%%%\end{equation}
%%%%\end{remark}

 \par
Using \eqref{eq:Z-bsic-p} and the change of variable formula,
one can establish the integration formula on $C_0[0,T]$:
\begin{equation}\label{eq:int-formula-exponential}
\int_{C_{0}[0,T]} e^{\langle{v, \rho x}\rangle} dm(x)
=\exp\bigg\{ \frac{\rho^2}{2}\|v\|_2^2 \bigg\}
\end{equation}
for every $v\in L_2[0,T]$  and  $\rho\in \mathbb R\setminus\{0\}$.

%%%%%%%%%%%%%%%%%%%%%%%%%%%%%%%%%%%%%%%%%%%%%%%%%%%%%%%%%%%%%%%%%%
%%%-----------------------[section]----------------------------%%%
%%%%%%%%%%%%%%%%%%%%%%%%%%%%%%%%%%%%%%%%%%%%%%%%%%%%%%%%%%%%%%%%%%
%% \setcounter{equation}{0}
%%
%%\section{Generalized Fourier-Feynman transform 
%%and generalized convolution product}
%%

\par
Throughout this paper, we will assume that
each functional $F$ (or $G$) we consider satisfies 
the 
conditions:
\begin{equation}\label{eq:s-condition01}
F:C_0[0,T] \to \mathbb C \,\,
\hbox{ is s.i.m. and s-a.e. defined},
\end{equation}
and for all $h\in L_2[0,T]$,
\begin{equation}\label{eq:s-condition02}
E_x\big[ \big|F\big(\rho \mathcal Z_h(x,\cdot)\big)\big|\big]<+\infty 
 \,\,\hbox{ for each }\, \rho>0.
\end{equation}

\par 
Throughout this paper, let $\mathbb C$, $\mathbb C_+$ and 
$\mathbb{\widetilde C}_+$ denote the set of complex numbers,  
complex numbers with positive real part, and  nonzero complex 
numbers  with nonnegative real part, respectively.
For each $\lambda \in \mathbb{\widetilde C}_+$, 
$\lambda^{1/2}$ denotes the principal square root of $\lambda$; i.e., 
$\lambda^{1/2}$ is always chosen to have positive real part, so that  
$\lambda^{-1/2}=(\lambda^{-1})^{1/2}$ is  in $\mathbb C_+$ for 
all $\lambda \in \widetilde{\mathbb C}_+$.

\begin{definition} \label{def:f-int}
Let $F$ satisfy conditions \eqref{eq:s-condition01} 
and \eqref{eq:s-condition02} above. Let $\mathcal Z_h$ be the Gaussian 
process given by \eqref{eq:g-process} and for $\lambda>0$, let
$J(h;\lambda)=E_{x}[F(\lambda^{-1/2}\mathcal Z_h(x,\cdot))]$. 
If there exists a function $J^* (h;\lambda)$ analytic on $\mathbb C_+$  
such that  $J^*(h;\lambda)=J(h;\lambda)$ 
for all $\lambda > 0$, then  $J^* (h;\lambda)$ is 
defined to be the analytic $\mathcal Z_h$-Wiener integral
(namely, the generalized analytic  Wiener integral with respect to 
the Gaussian paths $\mathcal{Z}_h(x,\cdot)$) 
of $F$  over $C_0[0,T]$ with parameter $\lambda$. In this case, 
for  $\lambda \in \mathbb C_+$ we write
\[
E_{x}^{\text{\rm an}w_{\lambda}} [F(\mathcal Z_h(x,\cdot))]
= J^*(h;\lambda).
\]
Let $q\ne 0$ be a real number and 
let $F$ be a functional such that the analytic $\mathcal Z_h$-Wiener integral
$E_{x}^{\text{\rm an}w_{\lambda}} [F(\mathcal Z_h(x,\cdot))]$ 
exists for all $\lambda \in \mathbb C_+$.
If the following limit exists, we call it 
the analytic $\mathcal Z_h$-Feynman integral (namely, the generalized analytic  Feynman 
integral with respect to the Gaussian paths $\mathcal{Z}_h(x,\cdot)$) 
of $F$ with parameter $q$ and we write
\begin{equation}\label{eq:F-int}
E_{x}^{\text{\rm an}f_{q}} [F(\mathcal Z_h(x,\cdot))]
=\lim_{\begin{subarray}{1} \lambda\to -iq \\  \lambda\in \mathbb C_+\end{subarray}}
E_{x}^{\text{\rm an}w_{\lambda}} [F(\mathcal Z_h(x,\cdot))].
\end{equation}
\end{definition}

\par 
Note that if $h\equiv 1$ on $[0,T]$, then these  definitions agree with 
the previous definitions of the  analytic Wiener integral and the analytic 
Feynman integral  \cite{CS76-1,CS80,HPS95,HPS96,HPS97-1,JS79-I,JS79-II,PS98,PSS98}.

\par 
Next we  state the definition of the generalized analytic FFT.

%%%%%-------------------------------{definition}%%%%%
\begin{definition} \label{def:fft}
For $k\in L_2[0,T]$, $\lambda \in \mathbb{C}_+$, and $y \in C_{0}[0,T]$, 
let
\begin{equation}\label{eq:anal-T}
T_{\lambda,k}(F)(y)
= E_{x}^{\text{\rm an}w_{\lambda}} [F(y+\mathcal Z_k(x,\cdot))].
\end{equation}
We define the   $L_1$ analytic $\mathcal Z_k$-FFT (namely,
the analytic  FFT with respect to the Gaussian paths $\mathcal Z_k$), 
$T_{q,k}^{(1)}(F)$ of $F$, 
by the formula $(\lambda\in \mathbb C_+)$
\[
T_{q,k}^{(1)}(F)(y)=\lim_{\lambda\to-iq}
T_{\lambda,k} (F)(y),				 
\]
for s-a.e. $y \in C_0[0,T]$ whenever this limit exists.
\end{definition}

\par
We note that   if $T_{q,k}^{(1)}(F)$  exists  and if $F\approx G$, 
then $T_{q,k}^{(1)}(G)$ exists   and  $T_{q,k}^{(1)}(G)\approx T_{q,k}^{(1)}(F)$.
One can see that for each $k\in L_2[0,T]$, $T_{q,k}^{(1)}(F)\approx T_{q,-k}^{(1)}(F)$ 
since 
\[
\int_{C_0[0,T]}F(x)dm(x)=\int_{C_0[0,T]}F(-x)dm(x).
\]

By Definitions \ref{def:f-int} and \ref{def:fft}, it is easy to see that
for a nonzero real number $q$,
\begin{equation}\label{eq:fft+F-int}
T_{q,k}^{(1)}(F)(y) 
=E_{h}^{\text{\rm an}f_{q}} [F(y+\mathcal Z_k(x,\cdot))] 
\end{equation}
for s-a.e. $y\in C_0[0,T]$ if both sides exist.

\par
Next we give the definition of the CTO.

%%%%%-------------------------------{definition}%%%%%
\begin{definition}\label{def:cp}
Let $F$ and $G$ be   functionals on $C_{0}[0,T]$.
For $\lambda \in \widetilde{\mathbb C}_+$, $g_1,g_2\in BV[0,T]$
and  $h_1, h_2 \in L_2[0,T]$, 
we define their  CTO  with respect to 
$\{\mathcal{Z}_{g_1},\mathcal{Z}_{g_2},\mathcal{Z}_{h_1},\mathcal{Z}_{h_2}\}$ 
 (if it exists) by
%%%%%----------equation
\begin{equation}\label{eq:gcp-Z}
\begin{aligned}
& (F*G)_{\lambda}^{(g_1,g_2;h_1,h_2)}(y)\\
&=
\begin{cases}
E_x^{\text{\rm  an}w_{\lambda}} [
F (
\mathcal Z_{g_1} (y,\cdot)+ \mathcal Z_{h_1} (x,\cdot) )
G (
\mathcal Z_{g_2} (y,\cdot)+ \mathcal Z_{h_2} (x,\cdot) ) ],   \quad   \lambda \in \mathbb C_+ \\
E_x^{\text{\rm  an}f_{q}} [
F (
\mathcal Z_{g_1} (y,\cdot)+ \mathcal Z_{h_1} (x,\cdot) )
G (
\mathcal Z_{g_2} (y,\cdot)+ \mathcal Z_{h_2} (x,\cdot) )],\\
 \qquad\qquad\qquad\qquad\qquad\qquad\qquad\qquad\quad \quad \, 
          \lambda=-iq,\,\, q\in \mathbb R, \,\,q\ne 0.
\end{cases}
\end{aligned}
\end{equation}
When $\lambda =-iq$,   we will denote
$(F*G)_{\lambda}^{(g_1,g_2;h_1,h_2)}$
by $(F*G)_{q}^{(g_1,g_2;h_1,h_2)}$.
\end{definition}

%%%%%-------------------------------{remark}%%%%%
\begin{remark}
(i)  Given  a function  $h$ in $L_2[0,T] \setminus\{0\}$, letting 
$h_1=-h_2=h/\sqrt{2}$ and $g_1=g_2\equiv1/\sqrt{2}$,  
equation \eqref{old-cp-g} reduces  the convolution
product   studied in \cite{CCKSY05,CSC12,HPS97-2,PS01}:
\begin{equation*}\label{old-cp-g}
%%\begin{aligned}& 
(F*G)_{q}^{ (g_1,g_2;h_1,h_2) }(y)
%%\\&
=E_x^{\text{\rm  an}f_{q}}\bigg[
F\bigg(\frac{y+ \mathcal Z_{k} (x,\cdot)}{\sqrt2}\bigg)
G\bigg(\frac{y- \mathcal Z_{k} (x,\cdot)}{\sqrt2}\bigg)\bigg]. 
%%\end{aligned}
\end{equation*}

(ii) Choosing $h_1=-h_2=g_1=g_2 \equiv 1/\sqrt{2}$,  we have
the convolution product studied in \cite{HPS95,HPS96,HPS97-1}:
\begin{equation*}\label{old-cp-w}
%%\begin{aligned}%%& 
(F*G)_{q}^{ (g_1,g_2;h_1,h_2) }(y)
%%\\&
=E_x^{\text{\rm  an}f_{q}}
\bigg[F\bigg(\frac{y+ x}{\sqrt2}\bigg)
G\bigg(\frac{y- x}{\sqrt2}\bigg)\bigg].
%%\\\end{aligned}
\end{equation*}

(iii) Choosing $h_1=h_2=g_1=-g_2 = 1/\sqrt{2}$ and $\lambda=1$,  we have
the convolution product studied in \cite{Yeh65}:
\begin{equation*}\label{old-cp-yeh}
%%\begin{aligned}%%& 
(F*G)^{ (g_1,g_2;h_1,h_2) }(y)
%%\\&
=E_x\bigg[F\bigg(\frac{y+ x}{\sqrt2}\bigg)
G\bigg(\frac{-y+x}{\sqrt2}\bigg)\bigg].
%%\\\end{aligned}
\end{equation*}
\end{remark}

%%%%%%%%%%%%%%%%%%%%%%%%%%%%%%%%%%%%%%%%%%%%%%%%%%%%%%%%%%%%%%%%%%
%%%%%%%%%%%%%%%%%%%%%%%%%%%%%%%%%%%%%%%%%%%%%%%%%%%%%%%%%%%%%%%%%%
%%%-----------------------[section]----------------------------%%%
%%%%%%%%%%%%%%%%%%%%%%%%%%%%%%%%%%%%%%%%%%%%%%%%%%%%%%%%%%%%%%%%%%
%%%%%%%%%%%%%%%%%%%%%%%%%%%%%%%%%%%%%%%%%%%%%%%%%%%%%%%%%%%%%%%%%%
\setcounter{equation}{0}
\section{Observation on the class $\mathcal E (C_{0}[0,T])$}\label{sec:E-space}

\par
Let $\mathcal{E}$ be the class of all functionals having the form
\begin{equation}\label{eq:Psi-weight-s}
\Psi_u (x) =e^{\langle{u,x}\rangle } \,\,\mbox{ for  a.e. } x\in C_{0}[0,T]
\end{equation}
with  $u \in L_2[0,T]$, and given  $q\in\mathbb R\setminus \{0\}$,
$v\in L_2[0,T]$, 
and $k\in BV[0,T]$, let $\mathcal E_{q,v,k}$  be the class of all 
functionals having the form 
\begin{equation}\label{eq:Psi-weight-sh}
\Psi_{u}^{q,v,k} (x) 
=\Psi_u (x)\exp\bigg\{\frac{i}{2q}\|vk\|_{2}^2\bigg\} \,\,\mbox{ for  a.e. } x\in C_{0}[0,T],
\end{equation}  
where $\Psi_u $ is given by equation \eqref{eq:Psi-weight-s}. 
We note that $\Psi_{u}$ and $\Psi_{u}^{q,v,k}$ are scale invariant measurable.

The functionals given by equation   
\eqref{eq:Psi-weight-sh} and linear combinations (with complex coefficients) 
of the  $\Psi_{u}^{q,v,k} $'s are called the partially  exponential type   
functionals on $C_0[0,T]$. The functionals given by  \eqref{eq:Psi-weight-s} 
are also partially  exponential type   functionals because 
$\Psi_{u}^{q,v,0} (x)=\Psi_u (x)$ for s-a.e. $x\in C_{0}[0,T]$.
%%
%%For notational convenience, we denote $\mathcal E$ by $\mathcal E_{0,v,k}$, $\mathcal E_{q,0,k}$
%%or $\mathcal E_{q,v,0}$ for any $q\in \mathbb R$, $v\in L_2[0,T]$ and $k\in BV[0,T]$.
%%

%%\begin{remark}\label{rem:cutom}
For each $(q,v,k)\in (\mathbb R\setminus\{0\})\times L_2[0,T]\times BV[0,T]$, the class  $\mathcal E_{q,v,k}$ is
dense in $L_2(C_{0}[0,T])$. Furthermore,   ${\text{\rm Span}}\mathcal E_{q,v,k}$,
the linear manifold generated by $\mathcal  E_{q,v,k}$ in $L_2(C_0[0,T])$, is 
closed under the ordinary multiplication
because  
\[
\Psi_{u_1}^{q,v,k} (x) \Psi_{u_2}^{q,v,k} (x) 
=\alpha \exp\bigg\{\langle{u_1+u_2,x}\rangle +\frac{i}{2q}\|vk\|_{2}^2\bigg\} 
=\alpha\Psi_{u_1+u_2}^{q,v,k} (x) 
\] 
for s-a.e. $y\in C_0[0,T]$, where 
with the complex coefficient $\alpha$  given by
$\exp\{(i/2q)\|vk\|_{2}^2\}$.
Thus the class  ${\text{\rm Span}}\mathcal E_{q,v,k}$ 
is a commutative algebra over the complex field $\mathbb C$.

In fact, using the fact that
\[
\Psi_{u_1}^{q_1,v_1,k_1} (x) \Psi_{u_2}^{q_2,v_2,k_2} (x) 
=\beta \exp\{\langle{u_1+u_2,x}\rangle\}
=\beta \Psi_{u_1+u_2} (x) 
\] 
%%for s-a.e. $y\in C_0[0,T]$, 
with
\[
\beta
=\exp\bigg\{\frac{i}{2q_1}\|v_1k_1\|_{2}^2 +\frac{i}{2q_2}\|v_2k_2\|_{2}^2\bigg\},
\]
one can see that
\[
{\text{\rm Span}}\bigg( 
\bigcup_{\begin{subarray}{1} q\in \mathbb R  \\  
 v\in L_2[0,T]\\
 k\in BV[0,T]    \end{subarray}}  \mathcal E_{q,v,k}\bigg)
={\text{\rm Span}}\mathcal E.
\] 
We denote the set of all partially  exponential type 
functionals on $C_{0}[0,T]$ by $\mathcal E (C_{0}[0,T])$, i.e.
$\mathcal E (C_{0}[0,T])={\text{\rm Span}}\mathcal E$.

\par
First, using  \eqref{eq:F-int} with $F$  replaced 
with $\Psi_{u}$,  \eqref{eq:Z-bsic-p}, 
\eqref{eq:int-formula-exponential}, it follows that for all 
$k\in BV[0,T]\setminus\{0\}$,
\begin{equation}\label{evalu-gfft-Phi-pre}
E_{x}^{\text{\rm an}f_{q}} [\Psi_u(\mathcal Z_k(x,\cdot))]
=\exp\bigg\{\frac{i}{2q}\|uk\|_{2}^2 \bigg\}. 
\end{equation}
Thus, using equations \eqref{eq:fft+F-int} and \eqref{evalu-gfft-Phi-pre}, 
we see that  the $L_1$ analytic $\mathcal Z_k$-FFT, $T_{q,k}^{(1)}$,
exists for all $q\in \mathbb R\setminus\{0\}$, and is given by  
\begin{equation}\label{evalu-gfft-Phi}
T_{q,k}^{(1)}(\Psi_u )(y)
=\Psi_u  (y)E_{x}^{\text{\rm an}f_{q}} [\Psi_u(\mathcal Z_k(x,\cdot))]
=\Psi_{u}^{q,u,k} (y) 
\end{equation}
for s-a.e. $y\in C_{0}[0,T]$. From equation \eqref{evalu-gfft-Phi}, 
we also see that  
$T_{q,k}^{(1)}: \mathcal E (C_{0}[0,T]) \to \mathcal E (C_{0}[0,T])$ 
is well defined.

\par
Next, using \eqref{eq:gcp-Z} with $F$ and $G$ replaced with 
$\Psi_{u} $ and $\Psi_{v} $, it follows that
for all real $q\in \mathbb R\setminus\{0\}$ and $g_1,g_2,h_3,h_4\in L_2[0,T]$,
the CTO of $\Psi_{u}$ and $\Psi_{v}$, $(\Psi_{u} *\Psi_{v} )_q^{(g_1,g_2;h_3,h_4)}$, 
exists  and is given by  
\begin{equation}\label{evalu-cto-Phis} 
(\Psi_{u} *\Psi_{v} )_q^{(g_1,g_2;h_1,h_2)}(y)
= \exp\bigg\{\langle{ug_1+vg_2,y}\rangle
+\frac{i}{2q}\|uh_1+vh_2\|_2^2\bigg\}
 \end{equation}
for s-a.e. $y\in C_{0}[0,T]$. 
Also, the functional  $(\Psi_{u} *\Psi_{v} )_q^{(g_1,g_2;h_3,h_4)}\equiv \Psi_{ug_1+vg_2}^{q,uh_1+vh_2,1} $ 
is an element of $\mathcal E(C_0[0,T])$.

\par
Using \eqref{evalu-cto-Phis}  and applying the techniques similar to those used in \eqref{evalu-gfft-Phi},
one can see that for s-a.e. $y\in C_0[0,T]$,
\[
\begin{aligned}
&T_{q, k}((\Phi_u*\Phi_v)_{q}^{(g_1,g_2;h_1,h_2)})(y)\\
&=\exp\bigg\{\langle{ug_1+vg_2,y}\rangle
+\frac{i}{2q}\|uh_1+vh_2\|_2^2
+\frac{i}{2q}\|(ug_1+vg_2)k\|_2^2\bigg\}\\
&=\exp\bigg\{\langle{ug_1+vg_2,y}\rangle
+\frac{i}{q}\int_0^Tu(t)v(t)\big(h_1(t)h_2(t)+g_1(t)g_2(t)k^2(t)\big)dt\\
&\qquad
+\frac{i}{2q}\int_0^Tu^2(t)\big(h_1^2(t)+g_1^2(t)k^2(t)\big)dt
+\frac{i}{2q}\int_0^Tv^2(t)\big(h_2^2(t)+g_2^2(t)k^2(t)\big)dt\bigg\}.
\end{aligned}
\]
In order to obtain an equation similar to \eqref{eq:intro},
one may put the condition that
\begin{equation}\label{condition-ob01}
h_1(t)h_2(t)+g_1(t)g_2(t)k^2(t)=0  \,\,\,\,\,m_L\mbox{-a.e. on }\,  [0,T].
\end{equation}
Then we can expect the following equation: 
\begin{equation}\label{eq:observation-A}
T_{q, k}((\Phi_u*\Phi_v)_{q}^{(g_1,g_2;h_1,h_2)})(y)
=T_{q,\mathbf{s}_1}(\Phi_u)(\mathcal Z_{g_1}(y,\cdot))
 T_{q,\mathbf{s}_2}(\Phi_v)(\mathcal Z_{g_2}(y,\cdot)) 
\end{equation}
for s-a.e. $y\in C_0[0,T]$, where $\mathbf{s}_i(i=1,2)$  is the function of bounded 
variation on $[0,T]$ such that $\mathbf{s}_i^2(t)=g_i^2(t)k^2(t)+h_i^2(t)$ for $m_L$-a.e. on  $[0,T]$.

On the other hand, using \eqref{evalu-gfft-Phi} and \eqref{eq:gcp-Z}, we 
also obtain that  for s-a.e. $y\in C_0[0,T]$,
\[
\begin{aligned}
&\big(T_{q, k_1}(F)*T_{q, k_2}(G)\big)_{q}^{(g_1,g_2;h_3,h_4)} (y)\\
&=\exp\bigg\{\langle{ug_1+vg_2,y}\rangle
+\frac{i}{q}\int_0^Tu(t)v(t) h_3(t)h_4(t)dt\\
&\qquad\quad 
+\frac{i}{2q}\int_0^Tu^2(t)\big(h_3^2(t)+k_1^2(t)\big)
+\frac{i}{2q}\int_0^Tv^2(t)\big(h_4^2(t)+k_2^2(t)\big)\bigg\},
\end{aligned}
\]
and under the condition 
\begin{equation}\label{condition-ob02}
m_L(\mbox{supp}(h_3)\cap \mbox{supp}(h_4))=0.    
\end{equation}   
it follows that
\begin{equation}\label{eq:observation-B}
\begin{aligned}
 (T_{q, k_1}(\Phi_u)*T_{q, k_2}(\Phi_v) )_{q}^{(g_1,g_2;h_3,h_4)} (y) 
 =T_{q,\mathbf{s}_3}(\Phi_u) (\mathcal Z_{g_1} (y,\cdot) )
  T_{q,\mathbf{s}_4}(\Phi_v) (\mathcal Z_{g_2} (y,\cdot) ) 
\end{aligned}
\end{equation}
for s-a.e. $y\in C_0[0,T]$, where $\mathbf{s}_3$ and $\mathbf{s}_4$ are the functions 
of bounded  variation on $[0,T]$ such that 
$\mathbf{s}_3^2(t)=h_3^2(t)+k_1^2(t)$ and
$\mathbf{s}_4^2(t)=h_4^2(t)+k_2^2(t)$.

In Section  \ref{sec:FFT+COP} below, we establish the 
relationships appearing in \eqref{eq:observation-A} and \eqref{eq:observation-B}
for  general functionals $F$ and $G$ on Wiener space.
Equations \eqref{condition-ob01} and \eqref{condition-ob02} play key roles in 
our main theorems (see Theorems \ref{thm:p-main} and \ref{thm:p-main-II} 
below) in this paper.

%%%%%%%%%%%%%%%%%%%%%%%%%%%%%%%%%%%%%%%%%%%%%%%%%%%%%%%%%%%%%%%%%%
%%%%%%%%%%%%%%%%%%%%%%%%%%%%%%%%%%%%%%%%%%%%%%%%%%%%%%%%%%%%%%%%%%
%%%-----------------------[section]----------------------------%%%
%%%%%%%%%%%%%%%%%%%%%%%%%%%%%%%%%%%%%%%%%%%%%%%%%%%%%%%%%%%%%%%%%%
%%%%%%%%%%%%%%%%%%%%%%%%%%%%%%%%%%%%%%%%%%%%%%%%%%%%%%%%%%%%%%%%%%
\setcounter{equation}{0}
\section{Rotation properties of Gaussian processes}\label{sec:rotation}

The essential  purpose of this  section  is to establish   rotation 
properties of Gaussian processes on the product Wiener  spaces $C_{0}^2[0,T]$ and $C_{0}^3[0,T]$. 
For $h_1,h_2\in BV[0,T]$ with $\|h_j\|_{2}>0$, $j\in\{1,2\}$,
let $\mathcal Z_{h_1}$ and $\mathcal Z_{h_2}$ be the Gaussian   processes given
by \eqref{eq:g-process} with $h$ replaced with $h_1$ and $h_2$ respectively.
Then the process
\[
\mathfrak Z_{h_1,h_2}: C_{0}[0,T]\times C_{0}[0,T]\times [0,T]\to \mathbb R
\]
given by
\begin{equation*}\label{eq:frak-Z} 
\mathfrak Z_{h_1,h_2}(x_1,x_2,t)
= \mathcal Z_{h_1}(x_1,t)+ \mathcal Z_{h_2}(x_2,t)
\end{equation*}
is also a Gaussian process with mean zero
and covariance function  
\[
\begin{aligned}
&E_{x_1}\big[ E_{x_2}\big[
\mathfrak Z_{h_1,h_2}(x_1,x_2,s)\mathfrak Z_{h_1,h_2}(x_1,x_2,t) \big]\big]\\
&=\beta_{h_1}(\min\{s,t\})+\beta_{h_2}(\min\{s,t\})\\
&=\mathfrak{v}_{h_1,h_2}(\min\{s,t\}).\\
\end{aligned}
\]

 On the other hand, let $h_1$ and $h_2$ be elements of $BV[0,T]$. Then there exists a
function  $\mathbf{s}\in BV[0,T]$ such that
\begin{equation}\label{eq:fn-rot}
 \mathbf{s}^2(t)=h_1^2(t)+h_2^2(t)
\end{equation}
for $m_L$-a.e. $t\in [0,T]$, where $m_L$ denotes the Lebesgue on $[0,T]$.
Note that  the function `$\mathbf{s}$' satisfying \eqref{eq:fn-rot} is not unique.
We will use the symbol  $\mathbf{s}(h_1,h_2)$ for the
functions `$\mathbf{s}$' that satisfy \eqref{eq:fn-rot} above.

Given $h_1,h_2\in BV[0,T]$, we consider the stochastic process $\mathcal Z_{\mathbf{s}(h_1,h_2)}$.
Then $\mathcal Z_{\mathbf{s}(h_1,h_2)}$ is a Gaussian process with mean zero
and covariance
\[
\begin{aligned}
&E_x\big[ \mathcal Z_{\mathbf{s}(h_1,h_2)}(x,s) 
 \mathcal Z_{\mathbf{s}(h_1,h_2)}(x,t) \big] \\
&=\int_0^{\min\{s,t\}} \mathbf{s}^2(h_1,h_2)(u)db(u)
=\int_0^{\min\{s,t\}} \big(h_1^2(u)+h_2^2(u)\big)db(u)\\
&=\beta_{h_1}(\min\{s,t\})+\beta_{h_2}(\min\{s,t\}) 
=\mathfrak v_{h_1,h_2}(\min\{s,t\}) .
\end{aligned}
\]

\par
From these facts, one can see that  $\mathfrak Z_{h_1,h_2}$ and $ \mathcal Z_{\mathbf{s}(h_1,h_2)}$
have the same distribution and that for any random variable $F$ on $C_{0}[0,T]$,
\begin{equation}\label{eq:1-Dim}
E_{x_1}\Big[E_{x_2}\Big[
F\Big(\mathcal Z_{h_1}(x_1,\cdot)+ \mathcal Z_{h_2}(x_2,\cdot)\Big)\Big]\Big] 
\stackrel{*}{=}
E_x\big[F\big(\mathcal Z_{\mathbf{s}(h_1,h_2)}(x,\cdot)\big)\big],
\end{equation}
where by $\stackrel{*}{=}$ we mean  that if either side exists,
both sides exist  and equality holds.

%%%%%%%%%%%%%%%%%%%%%%%%%%%%%%%%%%%%%%%%%%%%%%%%%%%%%%%%%%%%%%%%%%
%%%--------------------[sub-section]---------------------------%%%
%%%%%%%%%%%%%%%%%%%%%%%%%%%%%%%%%%%%%%%%%%%%%%%%%%%%%%%%%%%%%%%%%%
\subsection{A rotation property of Gaussian processes on $C_0^2[0,T]$}
The following  lemma  will be very useful in our main theorem.

\begin{lemma}\label{lemm:cov}
Given functions  $h_1$,  $h_2$, $h_3$ and $h_4$ in $L_2[0,T]\setminus\{0\}$, 
let the two stochastic processes $\mathfrak Z_{h_1,h_2}$ and 
$\mathfrak Z_{h_3,h_4}$ on $C_{0}^2[0,T]\times[0,T]$ be given by
\begin{equation}\label{eq:frak-Z0a1}
\mathfrak Z_{h_1,h_2}(x_1,x_2,t)
=\mathcal Z_{h_1}(x_1,t)+ \mathcal Z_{h_2}(x_2,t)
\end{equation}
and
\begin{equation}\label{eq:frak-Z0a2}
\mathfrak Z_{h_3,h_4}(x_1,x_2,t)
=\mathcal Z_{h_3}(x_1,t)+ \mathcal Z_{h_4}(x_2,t),
\end{equation}
respectively. Then the following assertions are equivalent.
\begin{itemize}
\item[(i)] $\mathfrak Z_{h_1,h_2}$ and $\mathfrak Z_{h_3,h_4}$ 
            are independent processes,
\item[(ii)] $h_1  h_3+ h_2  h_4=0$.
\end{itemize}
\end{lemma}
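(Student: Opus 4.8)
The plan is to prove the equivalence of independence of the two Gaussian processes with the orthogonality-type condition $h_1 h_3 + h_2 h_4 = 0$ by reducing everything to the computation of a single cross-covariance. Since $\mathfrak Z_{h_1,h_2}$ and $\mathfrak Z_{h_3,h_4}$ are jointly Gaussian with mean zero (each being a sum of independent mean-zero Gaussian processes built from the PWZ integrals on $C_0[0,T]\times C_0[0,T]$), independence is equivalent to their being uncorrelated at every pair of times. So the whole lemma hinges on showing that the cross-covariance function of the two processes is identically zero \emph{if and only if} $h_1 h_3 + h_2 h_4 = 0$ as an element of $L_2[0,T]$.

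First I would compute the cross-covariance explicitly. Using the covariance formula \eqref{eq:covh1h2} together with the fact that $\mathcal Z_{h_j}(x_1,\cdot)$ and $\mathcal Z_{h_\ell}(x_2,\cdot)$ are independent whenever one factor depends on $x_1$ and the other on $x_2$, the only surviving terms in
\[
E_{x_1}\big[E_{x_2}\big[\mathfrak Z_{h_1,h_2}(x_1,x_2,s)\,\mathfrak Z_{h_3,h_4}(x_1,x_2,t)\big]\big]
\]
are the $x_1$--$x_1$ term coming from $\mathcal Z_{h_1}$ and $\mathcal Z_{h_3}$ and the $x_2$--$x_2$ term coming from $\mathcal Z_{h_2}$ and $\mathcal Z_{h_4}$. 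By \eqref{eq:covh1h2} this equals
\[
\int_0^{\min\{s,t\}} h_1(u) h_3(u)\,du + \int_0^{\min\{s,t\}} h_2(u) h_4(u)\,du
= \int_0^{\min\{s,t\}} \big(h_1(u)h_3(u)+h_2(u)h_4(u)\big)\,du.
\]
This identity is the technical core and follows directly from the stated covariance property; it is straightforward but must be written out carefully.

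Next I would argue the two implications. For (ii)$\Rightarrow$(i): if $h_1 h_3 + h_2 h_4 = 0$ in $L_2[0,T]$, then the integral above vanishes for all $s,t$, so the cross-covariance is identically zero; since the joint process is Gaussian, zero cross-covariance forces independence of the two processes. For (i)$\Rightarrow$(ii): independence implies the cross-covariance vanishes for all $s,t \in [0,T]$, so the function $t \mapsto \int_0^t \big(h_1(u)h_3(u)+h_2(u)h_4(u)\big)\,du$ is identically zero on $[0,T]$; differentiating (i.e.\ using that an absolutely continuous function with zero value everywhere has zero derivative a.e., or invoking the Lebesgue differentiation theorem) yields $h_1 h_3 + h_2 h_4 = 0$ for $m_L$-a.e.\ $t$, which is the intended meaning of (ii).

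The main obstacle I anticipate is being precise about the passage from ``independence of two Gaussian processes'' to ``uncorrelated at every pair of times,'' and back. For a genuinely rigorous argument one wants independence to be characterized by the vanishing of all cross-covariances $E[\mathfrak Z_{h_1,h_2}(\cdot,s)\,\mathfrak Z_{h_3,h_4}(\cdot,t)]$; this is valid here because any finite collection of the relevant random variables is jointly Gaussian, so joint normality plus zero covariance gives independence of the two $\sigma$-algebras generated by the respective time-slices. I would state this Gaussian fact explicitly (referencing the mean-zero jointly Gaussian structure already established for $\mathfrak Z_{h_1,h_2}$) and treat the covariance computation as the substance of the proof; the equivalence of the integral condition with the pointwise $L_2$ condition (ii) is then a routine differentiation argument.
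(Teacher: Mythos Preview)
Your proposal is correct and follows essentially the same approach as the paper: reduce independence of the jointly Gaussian processes to vanishing cross-covariance, compute the cross-covariance via \eqref{eq:covh1h2} to obtain $\int_0^{\min\{s,t\}}(h_1h_3+h_2h_4)(u)\,du$, and conclude. Your write-up is in fact more explicit than the paper's, which ends with ``From this we can obtain the desired result'' and leaves the differentiation step for (i)$\Rightarrow$(ii) implicit.
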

\begin{proof}
Since the processes $\mathfrak Z_{h_1,h_2}$ and $\mathfrak Z_{h_3,h_4}$ are 
Gaussian, we know that $\mathfrak Z_{h_1,h_2}$ and $\mathfrak Z_{h_3,h_4}$ 
are independent if and only if
\[
E_{x_2}[E_{x_1}[ \mathfrak Z_{h_1,h_2}(x_1,x_2,s)
\mathfrak Z_{h_3,h_4}(x_1,x_2,t)]]
= 0
\]
for all $s,t\in[0,T]$. But using the Fubini theorem and equation \eqref{eq:covh1h2}, 
we have
\[
\begin{aligned}
&E_{x_2}[E_{x_1}[\mathfrak Z_{h_1,h_2}(x_1,x_2,s)
\mathfrak Z_{h_3,h_4}(x_1,x_2,t)]]\\
&=E_{x_2}[E_{x_1}[
\big(\mathcal Z_{h_1}(x_1,s)\mathcal Z_{h_3}(x_1,t)
    +\mathcal Z_{h_1}(x_1,s)\mathcal Z_{h_4}(x_2,t)\\
&\qquad\qquad 
    +\mathcal Z_{h_2}(x_2,s)\mathcal Z_{h_3}(x_1,t)
    +\mathcal Z_{h_2}(x_2,s)\mathcal Z_{h_4}(x_2,t)\big) ]]\\
&=\int_0^{\min\{s,t\}}h_1(u)h_3(u)du
+\int_0^{\min\{s,t\}}h_2(u)h_4(u)du\\
&=\int_0^{\min\{s,t\}}\big(h_1(u)h_3(u)+h_2(u)h_4(u)\big)du.
\end{aligned}
\]
From this we can obtain the desired result.
\end{proof}

%-------------------------- 
\begin{theorem}\label{theorem:2-Dim-1234}
Let $h_1$, $h_2$, $h_3$ and $h_4$ be nonzero elements of $BV[0,T]$ with 
$h_1 h_3+ h_2h_4=0$, and let $\mathbf{F}:C_{0}^2[0,T]\to\mathbb C$ be 
a $m\times m$-integrable functional. Then
%-------------------------- 
\[ 
\begin{aligned}
&E_{x_1}\big[E_{x_2}\big[\mathbf{F}
\big(\mathcal Z_{h_1}(x_1,\cdot)+\mathcal Z_{h_2}(x_2,\cdot),
 \mathcal Z_{h_3}(x_1,\cdot)+\mathcal Z_{h_4}(x_2,\cdot)\big)\big]\big]\\
&=E_{y}\big[E_{x}\big[\mathbf{F}
\big(\mathcal Z_{\mathbf{s}(h_1,h_2)}(x,\cdot),
\mathcal Z_{\mathbf{s}(h_3,h_4)} (y,\cdot)\big)\big]\big].
\end{aligned}
\]
\end{theorem}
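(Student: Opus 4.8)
The plan is to show that the two $C_0^2[0,T]$-valued random vectors sitting on the two sides of the asserted identity are both centered (mean-zero) Gaussian processes carrying the \emph{identical} $2\times2$ covariance kernel, so that they induce one and the same Gaussian measure $\nu$ on $C_0^2[0,T]$; the equality of the two iterated integrals then drops out, since each side equals $\int_{C_0^2[0,T]}\mathbf{F}\,d\nu$. Because every $h_j$ lies in $BV[0,T]$, each process $\mathcal Z_{h_j}(\cdot,\cdot)$ is continuous in $t$, so both pairs genuinely take values in the continuous path space $C_0^2[0,T]$, and the ``same distribution $\Rightarrow$ same integral'' step is legitimate in exactly the sense of \eqref{eq:1-Dim}: if either iterated integral exists, both do and they coincide. (Thus ``$m\times m$-integrable'' is to be read as integrability against this common induced law.)

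First I would record the covariance entries of the left-hand pair $(\mathfrak Z_{h_1,h_2},\mathfrak Z_{h_3,h_4})$. The two diagonal entries are, by the covariance computation at the start of Section \ref{sec:rotation}, equal to $\mathfrak v_{h_1,h_2}(\min\{s,t\})$ and $\mathfrak v_{h_3,h_4}(\min\{s,t\})$, while the off-diagonal cross-covariance entry is, by the computation in the proof of Lemma \ref{lemm:cov} (which rests on \eqref{eq:covh1h2}), equal to $\int_0^{\min\{s,t\}}\big(h_1(u)h_3(u)+h_2(u)h_4(u)\big)\,du$. Invoking the hypothesis $h_1h_3+h_2h_4=0$ --- equivalently, by Lemma \ref{lemm:cov}, invoking the independence of $\mathfrak Z_{h_1,h_2}$ and $\mathfrak Z_{h_3,h_4}$ --- this cross term vanishes identically.

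Next I would compute the covariance of the right-hand pair $(\mathcal Z_{\mathbf{s}(h_1,h_2)}(x,\cdot),\mathcal Z_{\mathbf{s}(h_3,h_4)}(y,\cdot))$. Using $\mathbf{s}^2(h_1,h_2)=h_1^2+h_2^2$ and $\mathbf{s}^2(h_3,h_4)=h_3^2+h_4^2$ together with the covariance formula for $\mathcal Z_{\mathbf{s}(h_1,h_2)}$ displayed in Section \ref{sec:rotation}, the two diagonal entries are again $\mathfrak v_{h_1,h_2}(\min\{s,t\})$ and $\mathfrak v_{h_3,h_4}(\min\{s,t\})$. Since the first coordinate depends only on $x$ and the second only on $y$, and $x$, $y$ are integrated against independent copies of Wiener measure, the two coordinates are independent and the cross-covariance is $0$. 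Hence the left-hand pair and the right-hand pair share mean zero and the identical covariance kernel, so they induce the same Gaussian measure on $C_0^2[0,T]$; feeding this common measure into $E[\mathbf{F}]$ gives the claimed equality.

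The routine part is the covariance bookkeeping above, which merely reassembles \eqref{eq:covh1h2}, Lemma \ref{lemm:cov}, and the Section \ref{sec:rotation} formula. The point needing the most care is the passage from ``equal covariance kernels'' to ``equal laws on the continuous path space $C_0^2[0,T]$'': I would justify it by recalling that a centered Gaussian process is determined by its covariance, so the finite-dimensional distributions of the two pairs coincide, and path continuity (guaranteed by $h_j\in BV[0,T]$) upgrades this to equality of the full laws on $C_0^2[0,T]$. Equivalently, and perhaps more transparently, the whole argument can be phrased as a two-coordinate application of \eqref{eq:1-Dim}: the left-hand independence supplied by Lemma \ref{lemm:cov} factorizes the left law into its marginals, the right-hand independence of $x$ and $y$ factorizes the right law, and \eqref{eq:1-Dim} matches each marginal, so the joint laws agree.
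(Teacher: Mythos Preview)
Your proposal is correct. Your primary argument --- compute the $2\times 2$ covariance kernel of each pair, observe they coincide (diagonal entries by the $\mathfrak v_{h_i,h_j}$ formula, off-diagonal zero by Lemma~\ref{lemm:cov} on the left and by independence of $x,y$ on the right), and conclude equality of Gaussian laws on $C_0^2[0,T]$ via path continuity --- is a clean distributional route that the paper does not take explicitly. The paper instead performs a concrete change-of-variables computation: it introduces the measurable transforms $X_{h_1,h_2}$ and $X_{h_3,h_4}$, uses Lemma~\ref{lemm:cov} to get independence and hence a product image measure, applies Fubini to separate the two image integrals, and then invokes \eqref{eq:1-Dim} on each factor separately. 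Your closing ``alternative phrasing'' (factorize each side by independence, then match marginals via \eqref{eq:1-Dim}) is exactly the skeleton of the paper's argument. What your covariance-matching approach buys is brevity and a single unified step in place of the paper's chain of equalities; what the paper's approach buys is that it never needs to invoke the abstract ``Gaussian law determined by covariance $+$ path continuity $\Rightarrow$ equal Borel laws'' principle, relying instead only on \eqref{eq:1-Dim} and standard change of variables.
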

%%%%%%%%%%%%%%%%%%%%%%%%%%%%%%%%%%%%%%%%%%%%%%%
%%%%%%%%%%%%%%%%%%%%%%%%%%%%%%%%%%%%%%%%%%%%%%%
\begin{proof}
Let the processes $\mathfrak Z_{h_1,h_2}, \mathfrak Z_{h_3,h_4}:
C_{0}^2[0,T]\times[0,T]\to\mathbb R$ be given by equation 
\eqref{eq:frak-Z0a1} and \eqref{eq:frak-Z0a2} respectively. Since $h_i$'s 
are functions of bounded variation, for all $(x_1,x_2)\in C_{0}^2[0,T]$ 
the sample paths $\mathfrak Z_{h_1,h_2}(x_1,x_2,\cdot)$ and 
$\mathfrak Z_{h_3,h_4}(x_1,x_2,\cdot)$ of the processes are continuous 
functions on $[0,T]$. Let $X_{h_1,h_2}$ and  $X_{h_3,h_4}$ be measurable 
transforms from $C_{0}^2[0,T]$ into $C_{0}^2[0,T]$  given by
\[
X_{h_1,h_2}(x_1,x_2)=\mathfrak Z_{h_1,h_2}(x_1,x_2,\cdot)
\]
and
\[
X_{h_3,h_4}(x_1,x_2)=\mathfrak Z_{h_3,h_4}(x_1,x_2,\cdot)
\]
respectively. Also let $P\equiv X_{h_1,h_2}(C_{0}^2[0,T])$ and 
$Q \equiv X_{h_3,h_4}(C_{0}^2[0,T])$ be the image spaces of the measurable 
transforms $X_{h_1,h_2}$ and $X_{h_3,h_4}$ respectively. For simplicity, 
let $m^2$ denote the product Wiener measure $m\times m$ on $C_0^2[0,T]$.

\par
By Lemma \ref{lemm:cov}, we see that $\mathfrak Z_{h_1,h_2}$ 
and $\mathfrak Z_{h_3,h_4}$ are independent processes on $C_{0}^2[0,T]$ 
and so $X_{h_1,h_2}$ and $X_{h_3,h_4}$ are independent measurable transforms. 
Thus, by the change of variables formula, the Fubini theorem  and 
 \eqref{eq:1-Dim}, it follows that
\[ 
\begin{aligned}
&E_{x_2}[E_{x_1}[\mathbf{F}
\big(\mathfrak Z_{h_1,h_2}(x_1,x_2,\cdot), 
     \mathfrak Z_{h_3,h_4}(x_1,x_2,\cdot)\big)]]\\
&=\int_{C_{0}^2[0,T]}\mathbf{F}
\big(X_{h_1,h_2}(x_1,x_2),X_{h_3,h_4}(x_1,x_2)\big)dm^2(x_1,x_2)\\
&=\int_{P\times Q}\mathbf{F}(z_1,z_2)  
d\big[(m^2\circ X_{h_1,h_2}^{-1})\times(m^2\circ X_{h_3,h_4}^{-1})\big](z_1,z_2)\\
&=\int_{Q}\bigg[\int_{P}\mathbf{F}(z_1,z_2)  
d(m^2\circ X_{h_1,h_2}^{-1})(z_1)\bigg]d(m^2\circ X_{h_3,h_4}^{-1})(z_2)\\
&=\int_{Q}\bigg[\int_{C_{0}^2[0,T]}\mathbf{F}
\big(X_{h_1,h_2}(x_1,x_2),z_2\big)dm^2(x_1,x_2)\bigg]
d(m^2\circ X_{h_3,h_4}^{-1})(z_2)\\
&=\int_{Q}\bigg[\int_{C_{0}^2[0,T]}\mathbf{F}
\big(\mathcal Z_{h_1}(x_1,\cdot)+\mathcal Z_{h_2}(x_2,\cdot),z_2\big)
dm^2(x_1,x_2)\bigg]d(m^2\circ X_{h_3,h_4}^{-1})(z_2)\\
&=\int_{Q}\bigg[\int_{C_{0}[0,T]}\mathbf{F}
\big(\mathcal Z_{\mathbf{s}(h_1,h_2)}(x,\cdot),z_2\big)dm(x)\bigg]
d(m^2\circ X_{h_3,h_4}^{-1})(z_2)\\
&=\int_{C_{0}[0,T]} \bigg[\int_{Q}\mathbf{F}
\big(\mathcal Z_{\mathbf{s}(h_1,h_2)}(x,\cdot),z_2\big) 
d(m^2\circ X_{h_3,h_4}^{-1})(z_2)\bigg]dm(x)\\
%%%%
&=\int_{C_{0}[0,T]} \bigg[\int_{C_{0}^2[0,T]}\mathbf{F}
\big(\mathcal Z_{\mathbf{s}(h_1,h_2)}(x,\cdot),X_{h_3,h_4}(x_1,x_2)\big)
dm^2(x_1,x_2)\bigg]dm(x)\\
&=\int_{C_{0}[0,T]} \bigg[\int_{C_{0}^2[0,T]}\mathbf{F}
\big(\mathcal Z_{\mathbf{s}(h_1,h_2)}(x,\cdot),  
\mathcal Z_{h_3}(x_1,\cdot) +\mathcal Z_{h_4}(x_2,\cdot)\big)
dm^2(x_1,x_2)\bigg]dm(x)\\
&=\int_{C_{0}[0,T]}\bigg[\int_{C_{0}[0,T]}\mathbf{F}
\big(\mathcal Z_{\mathbf{s}(h_1,h_2)}(x,\cdot),
\mathcal Z_{\mathbf{s}(h_3,h_4)}(y,\cdot)\big)
dm(y)\bigg]dm(x).
\end{aligned}
\]
Thus we obtain the desired result.
\end{proof}

The following corollaries  are very simple consequences 
of Theorem \ref{theorem:2-Dim-1234}.

%-------------------------- 
\begin{corollary}\label{theorem:2-Dim}
Let $h_1$ and  $h_2$ be nonzero  elements of $BV[0,T]$ and 
let  $\mathbf{F}: C_{0}^2[0,T]\to\mathbb C$ be a  $m^2$-integrable 
functional. Then
%-------------------------- 
\begin{equation}\label{eq:more-Bearman}
\begin{aligned}
&E_{x_2}[E_{x_1}[\mathbf{F}
\big(\mathcal Z_{h_1}(x_1,\cdot)-\mathcal Z_{h_2}(x_2,\cdot),
     \mathcal Z_{h_2}(x_1,\cdot)+\mathcal Z_{h_1}(x_2,\cdot)\big)]]\\
&=E_{y}[E_{x}[\mathbf{F}
\big(\mathcal Z_{\mathbf{s}(h_1, h_2)}(x,\cdot),
\mathcal Z_{\mathbf{s}(h_1, h_2)}(y,\cdot)\big)]].
\end{aligned}
\end{equation}
%--------------------------
\end{corollary}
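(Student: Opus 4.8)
The plan is to derive this corollary directly from Theorem~\ref{theorem:2-Dim-1234} by a suitable relabeling of the four driving functions. In the notation of that theorem, I would take its $h_1,h_2,h_3,h_4$ to be $h_1,\,-h_2,\,h_2,\,h_1$ respectively. All four are nonzero elements of $BV[0,T]$, since $h_1,h_2$ are and $BV[0,T]$ is stable under negation. With this choice the orthogonality hypothesis of the theorem becomes $h_1 h_2 + (-h_2)h_1 = 0$, which holds identically, so the theorem applies with no further conditions to check.

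Next I would rewrite the left-hand side of the theorem under this substitution. Because $\mathcal Z_{-h_2}(x_2,\cdot) = -\mathcal Z_{h_2}(x_2,\cdot)$ by linearity of the PWZ integral in its integrand, the first slot of $\mathbf{F}$ becomes $\mathcal Z_{h_1}(x_1,\cdot)-\mathcal Z_{h_2}(x_2,\cdot)$ and the second slot becomes $\mathcal Z_{h_2}(x_1,\cdot)+\mathcal Z_{h_1}(x_2,\cdot)$, which is exactly the integrand appearing in \eqref{eq:more-Bearman}. Theorem~\ref{theorem:2-Dim-1234} delivers $E_{x_1}[E_{x_2}[\cdots]]$, whereas \eqref{eq:more-Bearman} is written as $E_{x_2}[E_{x_1}[\cdots]]$; since $\mathbf{F}$ is $m^2$-integrable these agree by the Fubini theorem.

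The only point meriting a short remark is the identification of the two rotated functions on the right-hand side. By definition $\mathbf{s}(h_1,-h_2)$ is any element of $BV[0,T]$ whose square equals $h_1^2 + (-h_2)^2 = h_1^2 + h_2^2$, and $\mathbf{s}(h_2,h_1)$ is any such function whose square equals $h_2^2 + h_1^2 = h_1^2 + h_2^2$; both defining relations coincide with that of $\mathbf{s}(h_1,h_2)$. Exploiting the freedom in the (non-unique) choice of the symbol $\mathbf{s}$ noted after \eqref{eq:fn-rot}, I would fix one such function and take all three symbols to equal $\mathbf{s}(h_1,h_2)$. The right-hand side of the theorem then reads $E_{y}[E_{x}[\mathbf{F}(\mathcal Z_{\mathbf{s}(h_1,h_2)}(x,\cdot),\mathcal Z_{\mathbf{s}(h_1,h_2)}(y,\cdot))]]$, which is precisely the right-hand side of \eqref{eq:more-Bearman}. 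There is no genuine obstacle here; the entire content lives in Theorem~\ref{theorem:2-Dim-1234}, and this corollary is simply its instance $(h_1,-h_2,h_2,h_1)$. I expect the only care needed is the sign bookkeeping in the first slot together with the consistent choice of the square-root function $\mathbf{s}$.
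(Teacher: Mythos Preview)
Your proposal is correct and is exactly the approach the paper has in mind: the corollary is stated as a ``very simple consequence'' of Theorem~\ref{theorem:2-Dim-1234} with no separate proof, and your substitution $(h_1,h_2,h_3,h_4)\mapsto(h_1,-h_2,h_2,h_1)$ together with the observation $\mathbf{s}(h_1,-h_2)=\mathbf{s}(h_2,h_1)=\mathbf{s}(h_1,h_2)$ is precisely what is intended.
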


\begin{remark}
For any function $\theta(\cdot)$ of bounded variation, choosing 
$h_1(t)=\cos\theta(t)$ and $h_2(t)=\sin\theta(t)$ on $[0,T]$ in  
equation  \eqref{eq:more-Bearman} yields the main result in  \cite{B52}.
\end{remark}

%-------------------------- 
\begin{corollary}\label{theorem:2-Dim12}
Let $h_1$,  $h_2$, $h_3$ and  $h_4$ be as 
in Theorem \ref{theorem:2-Dim-1234}.
Let $F$ and $G$ be $\mu$-integrable functionals.
Then
%-------------------------- 
\begin{equation}\label{eq:R-pwz4-1234}
\begin{aligned}
&E_{x_1}\big[ E_{x_2}\big[
F\big(\mathcal Z_{h_1}(x_1,\cdot)+ \mathcal Z_{h_2}(x_2,\cdot)\big)
G\big(\mathcal Z_{h_3}(x_1,\cdot)+ \mathcal Z_{h_4}(x_2,\cdot)\big) \big]\big]\\
&=E_x\big[F\big(\mathcal Z_{\mathbf{s}(h_1,h_2)}(x,\cdot)\big)\big]
E_x\big[G\big(\mathcal Z_{\mathbf{s}(h_3,h_4)}(x,\cdot)\big)\big]
\end{aligned}
\end{equation}
%--------------------------
\end{corollary}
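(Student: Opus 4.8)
The plan is to apply Theorem~\ref{theorem:2-Dim-1234} to the product functional. First I would set $\mathbf{F}(z_1,z_2)=F(z_1)G(z_2)$ on $C_0^2[0,T]$. The hypotheses imposed on $h_1,h_2,h_3,h_4$ (nonzero elements of $BV[0,T]$ with $h_1h_3+h_2h_4=0$) are exactly those of Theorem~\ref{theorem:2-Dim-1234}, so the only point to verify before invoking it is that $\mathbf{F}$ is $m^2$-integrable. This follows at once from Tonelli's theorem: since $F$ and $G$ are each $m$-integrable,
\[
\int_{C_0^2[0,T]}|F(z_1)G(z_2)|\,dm^2(z_1,z_2)
=\bigg(\int_{C_0[0,T]}|F|\,dm\bigg)\bigg(\int_{C_0[0,T]}|G|\,dm\bigg)<+\infty.
\]

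With integrability secured, I would apply Theorem~\ref{theorem:2-Dim-1234} directly. Its left-hand side, evaluated at $\mathbf{F}(z_1,z_2)=F(z_1)G(z_2)$, is precisely the left-hand side of \eqref{eq:R-pwz4-1234}, while its right-hand side becomes
\[
E_y\big[E_x\big[F\big(\mathcal Z_{\mathbf{s}(h_1,h_2)}(x,\cdot)\big)\,
G\big(\mathcal Z_{\mathbf{s}(h_3,h_4)}(y,\cdot)\big)\big]\big].
\]

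The final step is to factor this iterated integral. Since the first factor depends only on the integration variable $x$ and the second only on $y$, Fubini's theorem (again justified by the integrability established above) lets me separate the two integrals, yielding
\[
E_x\big[F\big(\mathcal Z_{\mathbf{s}(h_1,h_2)}(x,\cdot)\big)\big]\,
E_y\big[G\big(\mathcal Z_{\mathbf{s}(h_3,h_4)}(y,\cdot)\big)\big].
\]
Relabeling the dummy variable $y$ as $x$ in the second factor gives exactly the right-hand side of \eqref{eq:R-pwz4-1234}. There is essentially no serious obstacle in this argument; the single point that must not be glossed over is the integrability verification, since it both licenses the appeal to Theorem~\ref{theorem:2-Dim-1234} and justifies the factorization of the iterated integral into a product of one-dimensional Wiener integrals.
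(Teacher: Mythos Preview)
Your proposal is correct and is exactly the approach the paper intends: the paper gives no explicit proof and simply states that the corollary is a ``very simple consequence'' of Theorem~\ref{theorem:2-Dim-1234}, which is precisely what you obtain by specializing to $\mathbf{F}(z_1,z_2)=F(z_1)G(z_2)$ and then factoring the resulting iterated integral.
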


%%%%%%%%%%%%%%%%%%%%%%%%%%%%%%%%%%%%%%%%%%%%%%%%%%%%%%%%%%%%%%%%%%
%%%--------------------[sub-section]---------------------------%%%
%%%%%%%%%%%%%%%%%%%%%%%%%%%%%%%%%%%%%%%%%%%%%%%%%%%%%%%%%%%%%%%%%%
\subsection{A rotation property of Gaussian processes on $C_0^3[0,T]$}

\begin{lemma}\label{lemm:cov}
Given functions  $h_1$,  $h_2$, $h_3$ and $h_4$ in 
$L_2[0,T]\setminus\{0\}$, 
let the two stochastic processes $\mathfrak Z_{h_1,h_2,0}$ 
and $\mathfrak Z_{h_3,0,h_4}$ on $C_{0}^{3}[0,T]\times[0,T]$
be given by
\begin{align}
\mathfrak W_{h_1,h_2,0}(x_1,x_2,x_3,t)
&=\mathcal Z_{h_1}(x_1,t)+ \mathcal Z_{h_2}(x_2,t)  \label{eq:frak-Z001}\\
\intertext{and} 
\mathfrak W_{h_3,0,h_4}(x_1,x_2,x_3,t)
&=\mathcal Z_{h_3}(x_1,t) + \mathcal Z_{h_4}(x_3,t) \label{eq:frak-Z002},
\end{align}
respectively. 
If $m_L (\text{\rm supp}(h_1)\cap  \text{\rm supp}(h_3))=0$, then
$\mathfrak W_{h_1,h_2,0}$ and $\mathfrak W_{h_3,0,h_4}$ are independent processes.
\end{lemma}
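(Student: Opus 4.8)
The plan is to argue exactly as in the two-coordinate case treated above: since both processes are jointly Gaussian, their independence is equivalent to the vanishing of the cross-covariance function. First I would observe that $\mathfrak W_{h_1,h_2,0}$ and $\mathfrak W_{h_3,0,h_4}$ have mean zero and are jointly Gaussian, being finite sums of the mean-zero Gaussian processes $\mathcal Z_{h_j}$ evaluated on the independent coordinates $x_1,x_2,x_3$. Consequently it suffices to verify that
\[
E_{x_1}\big[E_{x_2}\big[E_{x_3}\big[\mathfrak W_{h_1,h_2,0}(x_1,x_2,x_3,s)\,\mathfrak W_{h_3,0,h_4}(x_1,x_2,x_3,t)\big]\big]\big]=0
\]
for all $s,t\in[0,T]$.

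Next I would expand the product inside the expectation. Because the three Wiener processes $x_1$, $x_2$, $x_3$ are mutually independent, the Fubini theorem annihilates every term that couples two distinct coordinates: the three cross terms $E[\mathcal Z_{h_1}(x_1,s)\mathcal Z_{h_4}(x_3,t)]$, $E[\mathcal Z_{h_2}(x_2,s)\mathcal Z_{h_3}(x_1,t)]$ and $E[\mathcal Z_{h_2}(x_2,s)\mathcal Z_{h_4}(x_3,t)]$ each factor as a product in which at least one factor is a mean-zero integral, hence each vanishes. The only surviving contribution arises from the shared dependence on $x_1$, and by \eqref{eq:covh1h2} it equals
\[
E_{x_1}\big[\mathcal Z_{h_1}(x_1,s)\mathcal Z_{h_3}(x_1,t)\big]
=\int_0^{\min\{s,t\}}h_1(u)h_3(u)\,du.
\]

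Finally, I would invoke the support hypothesis. Since $m_L(\text{\rm supp}(h_1)\cap\text{\rm supp}(h_3))=0$, for $m_L$-a.e. $u\in[0,T]$ at least one of $h_1(u)$, $h_3(u)$ equals zero, so the product $h_1 h_3$ vanishes almost everywhere and the integral above is zero for every $s$ and $t$. The cross-covariance therefore vanishes identically, which by Gaussianity gives the asserted independence. There is no genuine obstacle here; the argument is a direct analogue of the preceding lemma, and the one point worth stating explicitly is that the common coordinate $x_1$ is the sole possible source of correlation between the two processes, so that requiring $h_1$ and $h_3$ to have essentially disjoint supports is precisely what decorrelates---and hence, for Gaussian processes, renders independent---$\mathfrak W_{h_1,h_2,0}$ and $\mathfrak W_{h_3,0,h_4}$.
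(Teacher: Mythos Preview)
Your argument is correct and follows essentially the same route as the paper's proof: reduce independence of the two Gaussian processes to the vanishing of their cross-covariance, expand the product, use independence of $x_1,x_2,x_3$ to kill the mixed-coordinate terms, and apply \eqref{eq:covh1h2} to identify the remaining term as $\int_0^{\min\{s,t\}}h_1(u)h_3(u)\,du$, which the support hypothesis forces to zero. If anything, your write-up is slightly more explicit than the paper's in spelling out why the three cross terms vanish and why the support condition annihilates the last integral.
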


\begin{remark}\label{consistency}
By the consistency property, the processes $\mathfrak W_{h_1,h_2,0}$ 
and $\mathfrak W_{h_1,0,h_3}$  can be considered as processes on $C_0^2[0,T]\times[0,T]$.
\end{remark}

\begin{proof}[Proof of Lemma \ref{lemm:cov}]
Since the processes $\mathfrak W_{h_1,h_2,0}$ and $\mathfrak W_{h_3,h_4}$ are 
Gaussian, we know that $\mathfrak W_{h_1,h_2,0}$ and $\mathfrak W_{h_3,0,h_4}$ 
are independent if and only if
\[
E_{x_3}\big[ E_{x_2}\big[E_{x_1}\big[\mathfrak W_{h_1,h_2,0}(x_1,x_2,x_3,s)
\mathfrak W_{h_3,0,h_4}(x_1,x_2,x_3,t)\big]\big]\big]
= 0
\]
for all $s,t\in[0,T]$. But using the Fubini theorem 
and equation \eqref{eq:covh1h2}, we have
\[
\begin{aligned}
&E_{x_3}\big[ E_{x_2}\big[E_{x_1}\big[\mathfrak W_{h_1,h_2,0}(x_1,x_2,x_3,s)
\mathfrak W_{h_3,0,h_4}(x_1,x_2,x_3,t)\big]\big]\big]\\
&=E_{x_3}\Big[ E_{x_2}\Big[E_{x_1}\Big[
\Big(\mathcal Z_{h_1}(x_1,s)\mathcal Z_{h_3}(x_1,t)
    +\mathcal Z_{h_1}(x_1,s)\mathcal Z_{h_4}(x_3,t)\\
&\qquad\qquad\qquad
    +\mathcal Z_{h_2}(x_2,s)\mathcal Z_{h_3}(x_1,t)
    +\mathcal Z_{h_2}(x_2,s)\mathcal Z_{h_4}(x_3,t)\Big) \Big]\Big]\Big]\\
&=\int_0^{\min\{s,t\}}h_1(u)h_3(u)du
\end{aligned}
\]
From this we can obtain the desired result.
\end{proof}

%%%%%-------------------------------{theorem}%%%%%
\begin{theorem}\label{theorem:2-Dim-II}
Let $h_1$, $h_2$, $h_3$ and $h_4$ be nonzero elements 
of $BV[0,T]$ with 
\[
m_L (\text{\rm supp}(h_1)\cap  \text{\rm supp}(h_3))=0,
\] 
and let $\mathbf{F}:C_{0}^2[0,T]\to\mathbb C$ be 
a $m^2$-integrable functional. Then
%-------------------------- 
\[ 
\begin{aligned}
&E_{x_3}\Big[ E_{x_2}\Big[E_{x_1}\Big[\mathbf{F}
\Big(\mathcal Z_{h_1}(x_1,\cdot)+\mathcal Z_{h_2}(x_2,\cdot),
     \mathcal Z_{h_3}(x_1,\cdot)+\mathcal Z_{h_4}(x_3,\cdot)\Big)\Big]\Big]\Big]\\
&=E_{y}\Big[ E_{x}\Big[\mathbf{F}\Big(\mathcal Z_{\mathbf{s}(h_1,h_2)}(x,\cdot),
\mathcal Z_{\mathbf{s}(h_3,h_4)} (y,\cdot)\Big)\Big]\Big].
\end{aligned}
\]
\end{theorem}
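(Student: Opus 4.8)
The plan is to mirror the proof of Theorem \ref{theorem:2-Dim-1234}, replacing the orthogonality hypothesis $h_1h_3+h_2h_4=0$ by the support condition and the base space $C_0^2[0,T]$ by $C_0^3[0,T]$. First I would introduce the two measurable transforms $W_1,W_2:C_0^3[0,T]\to C_0[0,T]$ defined, via \eqref{eq:frak-Z001} and \eqref{eq:frak-Z002}, by $W_1(x_1,x_2,x_3)=\mathfrak W_{h_1,h_2,0}(x_1,x_2,x_3,\cdot)$ and $W_2(x_1,x_2,x_3)=\mathfrak W_{h_3,0,h_4}(x_1,x_2,x_3,\cdot)$. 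Since the $h_i$ are of bounded variation, these sample paths are genuinely continuous, so the transforms are well defined, and the combined map $(W_1,W_2)$ sends $C_0^3[0,T]$ into $C_0^2[0,T]$. Write $m^3=m\times m\times m$ for the triple product Wiener measure.

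The crucial structural input is Lemma \ref{lemm:cov}: under the hypothesis $m_L(\mathrm{supp}(h_1)\cap\mathrm{supp}(h_3))=0$, the Gaussian processes $\mathfrak W_{h_1,h_2,0}$ and $\mathfrak W_{h_3,0,h_4}$ have vanishing cross-covariance and are therefore independent, so $W_1$ and $W_2$ are independent measurable transforms. Consequently the pushforward $m^3\circ(W_1,W_2)^{-1}$ factors as the product $(m^3\circ W_1^{-1})\times(m^3\circ W_2^{-1})$ on the image space $P\times Q\subseteq C_0^2[0,T]$. With independence in hand the computation is routine: I would apply the change-of-variables formula to rewrite the left-hand triple integral as $\int_{P\times Q}\mathbf{F}(z_1,z_2)\,d(m^3\circ(W_1,W_2)^{-1})(z_1,z_2)$, use the factorization together with Fubini to split it into iterated integrals, and then identify each marginal by \eqref{eq:1-Dim}. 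Indeed, $m^3\circ W_1^{-1}$ is the distribution of $\mathcal Z_{\mathbf{s}(h_1,h_2)}(x,\cdot)$ under $m$, and $m^3\circ W_2^{-1}$ is the distribution of $\mathcal Z_{\mathbf{s}(h_3,h_4)}(y,\cdot)$ under $m$. Substituting these back yields $E_y[E_x[\mathbf{F}(\mathcal Z_{\mathbf{s}(h_1,h_2)}(x,\cdot),\mathcal Z_{\mathbf{s}(h_3,h_4)}(y,\cdot))]]$, which is exactly the right-hand side.

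The main obstacle, and the only place where the hypotheses genuinely enter, is establishing the independence of $W_1$ and $W_2$ despite their sharing the common coordinate $x_1$. In Theorem \ref{theorem:2-Dim-1234} both processes depended on both coordinates $x_1,x_2$, and independence required the full orthogonality $h_1h_3+h_2h_4=0$; here the coordinates $x_2$ and $x_3$ are private to the first and second process respectively and contribute nothing to the cross-covariance, so only the shared coordinate $x_1$ matters. Its contribution reduces to $\int_0^{\min\{s,t\}}h_1(u)h_3(u)\,du$, which vanishes precisely because $\mathrm{supp}(h_1)\cap\mathrm{supp}(h_3)$ is $m_L$-null. This is supplied by Lemma \ref{lemm:cov}, after which the remaining argument is the same measure-theoretic bookkeeping as in Theorem \ref{theorem:2-Dim-1234}.
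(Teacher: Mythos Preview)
Your proposal is correct and follows essentially the same argument as the paper: define the two path-valued transforms on $C_0^3[0,T]$, invoke Lemma \ref{lemm:cov} for their independence, then use change of variables, Fubini, and \eqref{eq:1-Dim} to reduce each marginal to a single $\mathcal Z_{\mathbf{s}}$-process. The paper additionally cites Remark \ref{consistency} to justify treating the $C_0^3$-transforms as $C_0^2$-transforms when applying \eqref{eq:1-Dim}, which is exactly the step you handle implicitly when identifying $m^3\circ W_1^{-1}$ with the law of $\mathcal Z_{\mathbf{s}(h_1,h_2)}$.
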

\begin{proof}
Let the processes $\mathfrak W_{h_1,h_2,0}, \mathfrak W_{h_3,0,h_4}:C_{0}^3[0,T]\times[0,T]\to\mathbb R$ 
be given by equation \eqref{eq:frak-Z001} and \eqref{eq:frak-Z002} respectively. Since $h_i$'s 
are functions of bounded variation, for all $(x_1,x_2,x_3)\in C_{0}^3[0,T]$ 
the sample paths $\mathfrak W_{h_1,h_2,0}(x_1,x_2,x_3,\cdot)$ and 
$\mathfrak W_{h_3,0,h_4}(x_1,x_2,x_3,\cdot)$ of the processes are continuous 
functions on $[0,T]$. Let $Y_{h_1,h_2,0}$ and  $Y_{h_3,0,h_4}$ be measurable 
transforms from $C_{0}^3[0,T]$ into $C_{0}^3[0,T]$  given by
\[
Y_{h_1,h_2,0}(x_1,x_2,x_3)=\mathfrak W_{h_1,h_2,0}(x_1,x_2,x_3,\cdot)
\]
and
\[
Y_{h_3,0,h_4}(x_1,x_2,x_3)=\mathfrak W_{h_3,0,h_4}(x_1,x_2,x_3,\cdot)
\]
respectively. Also let 
$M\equiv Y_{h_1,h_2,0}(C_{0}^3[0,T])$ and 
$N\equiv Y_{h_3,0,h_4}(C_{0}^3[0,T])$ be the 
image spaces of the measurable transforms 
$Y_{h_1,h_2,0}$ and $Y_{h_3,0,h_4}$ respectively.
For simplicity, let $m^3$ denote the product Wiener measure $m\times m\times m$
on $C_0^3[0,T]$.

\par
By Lemma \ref{lemm:cov}, we see that $\mathfrak W_{h_1,h_2,0}$ 
and $\mathfrak W_{h_3,0,h_4}$ are independent processes on $C_{0}^3[0,T]$ 
and so $Y_{h_1,h_2,0}$ and $Y_{h_3,0,h_4}$ are independent measurable transforms. 
Thus, by the change of variables formula, the Fubini theorem,   \eqref{eq:1-Dim}, 
and Remark \ref{consistency}
it follows that
\[ 
\begin{aligned}
&E_{x_3}\Big[ E_{x_2}\Big[E_{x_1}\Big[\mathbf{F}
\Big(\mathcal Z_{h_1}(x_1,\cdot)+\mathcal Z_{h_2}(x_2,\cdot),
     \mathcal Z_{h_3}(x_1,\cdot)+\mathcal Z_{h_4}(x_3,\cdot)\Big)\Big]\Big]\Big]\\
%%%&\equiv
%%%\int_{C_{0}^3[0,T]}\mathbf{F}
%%%\Big(\mathfrak W_{h_1,h_2,0}(x_1,x_2,x_3,\cdot), 
%%%     \mathfrak W_{h_3,0,h_4}(x_1,x_2,x_3,\cdot)\Big)dm^2(x_1,x_2,x_3)\\
&=\int_{C_{0}^3[0,T]}\mathbf{F}
\Big(Y_{h_1,h_2,0}(x_1,x_2,x_3),Y_{h_3,0,h_4}(x_1,x_2,x_3)\Big)dm^3(x_1,x_2,x_3)\\
%%%&=\int_{M\times N}\mathbf{F}(w_1,w_2)  
%%%d\big[(m^3\circ Y_{h_1,h_2,0}^{-1})\times(m^3\circ Y_{h_3,0,h_4}^{-1})\big](w_1,w_2)\\
&=\int_{N}\bigg[\int_{M}\mathbf{F}(w_1,w_2)  
d(m^3\circ Y_{h_1,h_2,0}^{-1})(w_1)\bigg]d(m^3\circ Y_{h_3,0,h_4}^{-1})(w_2)\\
&=\int_{N}\bigg[\int_{C_{0}^3[0,T]}\mathbf{F}
\Big(Y_{h_1,h_2,0}(x_1,x_2,x_3),w_2\Big)dm^3(x_1,x_2,x_3)\bigg]
d(m^3\circ Y_{h_3,0,h_4}^{-1})(w_2)\\
&=\int_{N}\bigg[\int_{C_{0}^2[0,T]}\mathbf{F}
\Big(\mathcal Z_{h_1}(x_1,\cdot)+\mathcal Z_{h_2}(x_2,\cdot),w_2\Big)
dm^2(x_1,x_2)\bigg]d(m^2\circ Y_{h_3,0,h_4}^{-1})(w_2)\\
&=\int_{N}\bigg[\int_{C_{0}[0,T]}\mathbf{F}
\Big(\mathcal Z_{\mathbf{s}(h_1,h_2)}(x,\cdot),w_2\Big)dm(x)\bigg]
d(m^3\circ Y_{h_3,0,h_4}^{-1})(w_2)\\
&=\int_{C_{0}[0,T]} \bigg[\int_{N}\mathbf{F}
\Big(\mathcal Z_{\mathbf{s}(h_1,h_2)}(x,\cdot),w_2\Big) 
d(m^3\circ Y_{h_3,0,h_4}^{-1})(w_2)\bigg]dm(x)\\
%%%%
&=\int_{C_{0}[0,T]} \bigg[\int_{C_{0}^3[0,T]}\mathbf{F}
\Big(\mathcal Z_{\mathbf{s}(h_1,h_2)}(x,\cdot),Y_{h_3,0,h_4}(x_1,x_2,x_3)\Big)
dm^3(x_1,x_2,x_3)\bigg]dm(x)\\
&=\int_{C_{0}[0,T]} \bigg[\int_{C_{0}^2[0,T]}\mathbf{F}
\Big(\mathcal Z_{\mathbf{s}(h_1,h_2)}(x,\cdot),  
\mathcal Z_{h_3}(x_1,\cdot) +\mathcal Z_{h_4}(x_3,\cdot)\Big)
dm^2(x_1,x_3)\bigg]dm(x)\\
&=\int_{C_{0}[0,T]}\bigg[\int_{C_{0}[0,T]}\mathbf{F}
\Big(\mathcal Z_{\mathbf{s}(h_1,h_2)}(x,\cdot),\mathcal Z_{\mathbf{s}(h_3,h_4)}(y,\cdot)\Big)
dm(y)\bigg]dm(x).
\end{aligned}
\]
Thus we obtain the desired result.
\end{proof}

\par
The following corollary is a very simple consequence 
of Theorem \ref{theorem:2-Dim-II}.

%-------------------------- 
\begin{corollary}\label{theorem:2-Dim12-II-coro}
Let $h_1$,  $h_2$, $h_3$ and  $h_4$ be as 
in Theorem \ref{theorem:2-Dim-II}.
Let $F$ and $G$ be $\mu$-integrable functionals.
Then
%-------------------------- 
\begin{equation}\label{eq:R-pwz4-1234-II}
\begin{aligned}
& E_{x_3}\big[E_{x_2}\big[ E_{x_1}\big[
F\big(\mathcal Z_{h_1}(x_1,\cdot)+ \mathcal Z_{h_2}(x_2,\cdot)\big)
G\big(\mathcal Z_{h_3}(x_1,\cdot)+ \mathcal Z_{h_4}(x_3,\cdot)\big) \big]\big]\big]\\
&=E_x\big[F\big(\mathcal Z_{\mathbf{s}(h_1,h_2)}(x,\cdot)\big)\big]
E_x\big[G\big(\mathcal Z_{\mathbf{s}(h_3,h_4)}(x,\cdot)\big)\big]
\end{aligned}
\end{equation}
%--------------------------
\end{corollary}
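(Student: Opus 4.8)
The plan is to obtain this corollary as an immediate specialization of Theorem \ref{theorem:2-Dim-II} to a product functional. First I would define $\mathbf{F}:C_0^2[0,T]\to\mathbb C$ by $\mathbf{F}(z_1,z_2)=F(z_1)G(z_2)$ and verify that it meets the hypothesis of the theorem, namely that $\mathbf{F}$ is $m^2$-integrable. Because the two factors depend on disjoint coordinates, this follows from the integrability of $F$ and $G$ together with the Tonelli--Fubini theorem:
\[
\int_{C_0^2[0,T]}|F(z_1)G(z_2)|\,dm^2(z_1,z_2)
=\Big(\int_{C_0[0,T]}|F|\,dm\Big)\Big(\int_{C_0[0,T]}|G|\,dm\Big)<\infty .
\]

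With the hypothesis in hand, I would apply Theorem \ref{theorem:2-Dim-II} to this $\mathbf{F}$. Substituting $\mathbf{F}(z_1,z_2)=F(z_1)G(z_2)$ into the left-hand side of the theorem's identity reproduces verbatim the left-hand side of \eqref{eq:R-pwz4-1234-II}, so nothing is required there. The entire content of the corollary therefore resides in simplifying the right-hand side that the theorem supplies, which takes the form
\[
E_{y}\Big[E_{x}\Big[F\big(\mathcal Z_{\mathbf{s}(h_1,h_2)}(x,\cdot)\big)
G\big(\mathcal Z_{\mathbf{s}(h_3,h_4)}(y,\cdot)\big)\Big]\Big].
\]

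The last step is to factor this iterated integral. Its integrand is a product of one factor depending only on $x$ and one depending only on $y$, so the inner $x$-integration pulls the $y$-dependent factor $G(\mathcal Z_{\mathbf{s}(h_3,h_4)}(y,\cdot))$ out as a constant, and a second use of Fubini separates the two single integrals into
\[
E_x\big[F\big(\mathcal Z_{\mathbf{s}(h_1,h_2)}(x,\cdot)\big)\big]\,
E_y\big[G\big(\mathcal Z_{\mathbf{s}(h_3,h_4)}(y,\cdot)\big)\big].
\]
Renaming the dummy variable $y$ as $x$ in the second factor yields exactly the right-hand side of \eqref{eq:R-pwz4-1234-II}. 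I expect no genuine obstacle here beyond bookkeeping; the one point meriting care is confirming the $m^2$-integrability of $\mathbf{F}$, which both legitimizes the application of Theorem \ref{theorem:2-Dim-II} and justifies the Fubini interchanges used in the final factorization, and this is settled by the displayed estimate above.
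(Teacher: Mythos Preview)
Your proposal is correct and is exactly the approach the paper has in mind: the paper states this corollary as ``a very simple consequence of Theorem \ref{theorem:2-Dim-II}'' and gives no further proof, so specializing that theorem to the product functional $\mathbf{F}(z_1,z_2)=F(z_1)G(z_2)$ and then factoring the resulting iterated integral is precisely the intended argument.
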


%%%%%%%%%%%%%%%%%%%%%%%%%%%%%%%%%%%%%%%%%%%%%%%%%%%%%%%%%%%%%%%%%%
%%%%%%%%%%%%%%%%%%%%%%%%%%%%%%%%%%%%%%%%%%%%%%%%%%%%%%%%%%%%%%%%%%
%%%-----------------------[section]----------------------------%%%
%%%%%%%%%%%%%%%%%%%%%%%%%%%%%%%%%%%%%%%%%%%%%%%%%%%%%%%%%%%%%%%%%%
%%%%%%%%%%%%%%%%%%%%%%%%%%%%%%%%%%%%%%%%%%%%%%%%%%%%%%%%%%%%%%%%%%
\setcounter{equation}{0}
\section{Fourier--Feynman transforms and  convolution type operations}\label{sec:FFT+COP}

In this section we establish the facts that
the Fourier--Feynman transform of the convolution type operation is a product
of the Fourier--Feynman transforms (Theorem \ref{thm:p-main} below)
and that the convolution type operation of the Fourier--Feynman transforms is 
a product of the Fourier--Feynman transforms (Theorem \ref{thm:p-main-II} below).

%%%%%%%%%%%%%%%%%%%%%%%%%%%%%%%%%%%%%%%%%%%%%%%%%%%%%%%%%%%%%%%%%%
%%%--------------------[sub-section]---------------------------%%%
%%%%%%%%%%%%%%%%%%%%%%%%%%%%%%%%%%%%%%%%%%%%%%%%%%%%%%%%%%%%%%%%%%
\subsection{Transforms of convolution type operations}\label{sec:FFT-COP}

It will be helpful to establish the following  lemma  before giving the
first  theorem in this section.

\begin{lemma} \label{le:Fu-lemma}
Let $g_1$, $g_2$, $h_1$, $h_2$ and $k$ be nonzero functions in $BV[0,T]$.
For each $j\in\{1,2\}$,
%% let $\mathcal Z_{h_j}$ be the Gaussian processes given by \eqref{eq:g-process}
%%with $h$ replaced with $h_j$ and 
let  $\mathbf{s}(g_jh_j,h_j)$ be the functions in $BV[0,T]$ 
satisfying  equation \eqref{eq:fn-rot} with $h_1$ and $h_2$ replaced with $g_jhj$ and $h_j$. 
Also, let  $F$ and $G$  be $\mathbb C$-valued scale invariant measurable functionals on
$C_0[0,T]$ such that the  analytic  Wiener integrals
$T_{\lambda,\mathbf{s}(g_1k,h_1)}(F)(y)$, $T_{\lambda,\mathbf{s}(g_2k,h_2)}(G)(y)$
and $(F*G)_{\lambda}^{(g_1,g_2;h_1,h_2)}(y)$
exist for every $\lambda \in \mathbb C$ and s-a.e. $y\in C_{0}[0,T]$.
Furthermore assume that given $k \in BV[0,T]\setminus\{0\}$,
the analytic transform of $(F*G)_{\lambda_1}^{(g_1,g_2;h_1,h_2)}$, 
$T_{\lambda_2, k}((F*G)_{\lambda_1}^{(g_1,g_2;h_1,h_2)})(y)$ 
exists for every $(\lambda_1,\lambda_2) \in \mathbb C_+\times \mathbb C_+$ and
s-a.e. $y\in C_{0}[0,T]$. 
Suppose $g_1g_2k^2+h_1h_2=0$.
Then  for each $\lambda\in\mathbb C_+$ and 
s-a.e. $y\in C_{0}[0,T]$,
\begin{equation}\label{eq:Fubini}
\begin{aligned}
&T_{\lambda, k}((F*G)_{\lambda}^{(g_1,g_2;h_1,h_2)})(y)\\
&=T_{\lambda,\mathbf{s}(g_1k,h_1)}(F)\big(\mathcal Z_{g_1}(y,\cdot)\big)
T_{\lambda,\mathbf{s}(g_2k,h_2)}(G)\big(\mathcal Z_{g_2}(y,\cdot)\big).\\
%%&=T_{\lambda,\mathbf{s}(g_1k,h_1)}^{g_1}(F)(y)
%%T_{\lambda,\mathbf{s}(g_2k,h_2)}^{g_2}(G)(y).
\end{aligned}
\end{equation}
\end{lemma}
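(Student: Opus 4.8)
The plan is to prove \eqref{eq:Fubini} first for real $\lambda>0$, where every analytic Wiener integral reduces to an ordinary Wiener integral with the path scaled by $\lambda^{-1/2}$, and then to pass to all of $\mathbb C_+$ by uniqueness of analytic continuation. For $\lambda>0$, I would unfold the left side directly from the definitions \eqref{eq:anal-T} and \eqref{eq:gcp-Z}. Writing $E_w$ for the transform variable and $E_x$ for the convolution variable, the transform evaluates $(F*G)_\lambda^{(g_1,g_2;h_1,h_2)}$ at the scaled argument $y+\lambda^{-1/2}\mathcal Z_k(w,\cdot)$, and the convolution itself scales the $x$-processes by $\lambda^{-1/2}$.

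The key algebraic step is a composition identity. Since each $\mathcal Z_h$ is linear in its path and, by \eqref{eq:Z-bsic-p} applied with $v=g_j\chi_{[0,t]}$ and $h=k$, one has $\mathcal Z_{g_j}(\mathcal Z_k(w,\cdot),\cdot)=\mathcal Z_{g_jk}(w,\cdot)$, it follows that $\mathcal Z_{g_j}(y+\lambda^{-1/2}\mathcal Z_k(w,\cdot),\cdot)=\mathcal Z_{g_j}(y,\cdot)+\lambda^{-1/2}\mathcal Z_{g_jk}(w,\cdot)$ for $j\in\{1,2\}$. After this substitution both source processes carry the common factor $\lambda^{-1/2}$, so that for $\lambda>0$,
\[
\begin{aligned}
&T_{\lambda,k}\big((F*G)_\lambda^{(g_1,g_2;h_1,h_2)}\big)(y)\\
&= E_w\Big[E_x\big[F\big(\mathcal Z_{g_1}(y,\cdot)+\lambda^{-1/2}[\mathcal Z_{g_1k}(w,\cdot)+\mathcal Z_{h_1}(x,\cdot)]\big)\\
&\qquad\qquad\times G\big(\mathcal Z_{g_2}(y,\cdot)+\lambda^{-1/2}[\mathcal Z_{g_2k}(w,\cdot)+\mathcal Z_{h_2}(x,\cdot)]\big)\big]\Big],
\end{aligned}
\]
the interchange of the order of integration being licensed by the standing integrability hypotheses of the lemma.

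Next I would apply Corollary \ref{theorem:2-Dim12} in the two path variables $(w,x)$, treating $y$ and $\lambda$ as frozen parameters, that is, applying it to the functionals $\eta\mapsto F(\mathcal Z_{g_1}(y,\cdot)+\lambda^{-1/2}\eta)$ and $\zeta\mapsto G(\mathcal Z_{g_2}(y,\cdot)+\lambda^{-1/2}\zeta)$. With the identification $H_1=g_1k$, $H_2=h_1$, $H_3=g_2k$, $H_4=h_2$, the orthogonality requirement $H_1H_3+H_2H_4=0$ of that corollary is exactly the standing assumption $g_1g_2k^2+h_1h_2=0$, and the associated rotation functions are $\mathbf s(g_1k,h_1)$ and $\mathbf s(g_2k,h_2)$. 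The corollary then factorizes the iterated integral into a product of single Wiener integrals, and reading each factor back through \eqref{eq:anal-T} identifies them as $T_{\lambda,\mathbf s(g_1k,h_1)}(F)(\mathcal Z_{g_1}(y,\cdot))$ and $T_{\lambda,\mathbf s(g_2k,h_2)}(G)(\mathcal Z_{g_2}(y,\cdot))$, which is \eqref{eq:Fubini} for $\lambda>0$.

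Finally I would extend the identity from $(0,\infty)$ to $\mathbb C_+$. The right side is analytic in $\lambda$ on $\mathbb C_+$, being a product of the two transforms whose existence, hence analyticity, is assumed in the hypotheses. For the left side, the assumed existence of $T_{\lambda_2,k}\big((F*G)_{\lambda_1}^{(g_1,g_2;h_1,h_2)}\big)(y)$ on $\mathbb C_+\times\mathbb C_+$ makes it separately analytic in each of $\lambda_1,\lambda_2$, hence jointly analytic (Hartogs) and so analytic along the diagonal $\lambda_1=\lambda_2=\lambda$; two functions analytic on the connected open set $\mathbb C_+$ that agree on the ray $(0,\infty)$ coincide there. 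I expect the main obstacle to be precisely this last point: cleanly justifying that the doubly-continued left side is analytic on the diagonal, and that the rotation of Corollary \ref{theorem:2-Dim12} may be invoked at the level where the two integrations are coupled through a single parameter $\lambda$. The existence assumptions built into the statement of the lemma are exactly what is needed to license both the interchange of integration order for $\lambda>0$ and the diagonal analyticity used in the continuation.
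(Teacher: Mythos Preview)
Your proposal is correct and follows essentially the same route as the paper: establish \eqref{eq:Fubini} for real $\lambda>0$ by unfolding the definitions, using the composition identity $\mathcal Z_{g_j}(\mathcal Z_k(w,\cdot),\cdot)=\mathcal Z_{g_jk}(w,\cdot)$, and then invoking Corollary~\ref{theorem:2-Dim12} under the hypothesis $g_1g_2k^2+h_1h_2=0$ to factorize the double Wiener integral; finally extend to $\mathbb C_+$ by analytic continuation. The paper handles the continuation step via the discussion in Remark~\ref{re:Fu-lemma} rather than an explicit appeal to Hartogs, but the content is the same.
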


%%%%%%%%%%%%%%%-------------------------------------------------------------[proof]

\begin{remark}\label{re:Fu-lemma}
%%\noindent
%%\textbf{Comments on the assumptions in Lemma \ref{le:Fu-lemma}.}
(\textit{Comments on the assumptions in Lemma \ref{le:Fu-lemma}})

Let a function $y\in C_0[0,T]$ be given. 
For $(\lambda_1,\rho_2)\in \mathbb C_+\times (0,+\infty)$, let
\[
\begin{aligned}
J_{\lambda_1^*}(k;\rho_2) 
%%&\equiv J_{(F*G)_{\lambda_1}^{(g_1,g_2;h_1,h_2)}}(k;\lambda_2)\\
:&=  T_{k,\rho_2}\big((F*G)_{\lambda_1}^{(g_1,g_2;h_1,h_2)} \big)(y)\\
&= E_{x_2}\big[(F*G)_{\lambda_1}^{(g_1,g_2;h_1,h_2)}\big(y+\rho_2^{-1/2}\mathcal Z_{k}(x_2,\cdot)\big)\big]\\
%%%
&=E_{x_2}\big[E_{x_1}^{\text{\rm an}w_{\lambda_1}}
\big[F\big(\mathcal Z_{g_1}\big(y + \rho_2^{-1/2}\mathcal Z_{k}(x_2,\cdot)\big)
+ \mathcal Z_{h_1}(x_1,\cdot) \big)\\
&\qquad\qquad\qquad\,\,\,\,\times
G\big(\mathcal Z_{g_2}\big(y +\rho_2^{-1/2}\mathcal Z_{k}(x_2,\cdot)\big)
+ \mathcal Z_{h_2}(x_1,\cdot) \big)
\big]\big]\\
%%%
&=E_{x_2}\big[E_{x_1}^{\text{\rm an}w_{\lambda_1}}
\big[F\big(\mathcal Z_{g_1}(y,\cdot) + \rho_2^{-1/2}\mathcal Z_{g_1k}(x_2,\cdot) 
+\mathcal Z_{h_1}(x_1,\cdot) \big)\\
&\qquad\qquad\qquad\,\,\,\,\times
G\big(\mathcal Z_{g_2}(y,\cdot) +\rho_2^{-1/2}\mathcal Z_{g_2k}(x_2,\cdot) 
+ \mathcal Z_{h_2}(x_1,\cdot) \big)
\big]\big],
\end{aligned}
\]
for $(\rho_1,\lambda_2)\in (0,+\infty)\times \mathbb C_+$, let
\[
\begin{aligned}
J_{\rho_1}^*(k;\lambda_2)
:&= T_{k,\lambda_2}\big((F*G)_{\rho_1}^{(g_1,g_2;h_1,h_2)} \big)(y)\\
&= E_{x_2}^{\text{\rm an}w_{\lambda_2}}
\big[(F*G)_{\rho_1}^{(g_1,g_2;h_1,h_2)}\big(y+\mathcal Z_{k}(x_2,\cdot)\big)\big]\\
&=E_{x_2}^{\text{\rm an}w_{\lambda_2}}\big[ E_{x_1}
\big[F\big(\mathcal Z_{g_1}\big(y + \mathcal Z_{k}(x_2,\cdot)\big)
+\rho_1^{-1/2}\mathcal Z_{h_1}(x_1,\cdot) \big)\\
&\qquad\quad\qquad\,\,\,\,\,\times
G\big(\mathcal Z_{g_2}\big(y +\mathcal Z_{k}(x_2,\cdot)\big)
+ \rho_1^{-1/2}\mathcal Z_{h_2}(x_1,\cdot) \big)
\big]\big]\\
%%%
&=E_{x_2}^{\text{\rm an}w_{\lambda_2}}\big[E_{x_1}
\big[F\big(\mathcal Z_{g_1}(y,\cdot) + \mathcal Z_{g_1k}(x_2,\cdot) 
+\rho_1^{-1/2}\mathcal Z_{h_1}(x_1,\cdot) \big)\\
&\qquad\quad\qquad\qquad\times
G\big(\mathcal Z_{g_2}(y,\cdot) +\mathcal Z_{g_2k}(x_2,\cdot) 
+\rho_1^{-1/2}\mathcal Z_{h_2}(x_1,\cdot) \big)
\big]\big],
\end{aligned}
\]
and for $(\rho_1,\rho_2)\in (0,+\infty)\times(0,+\infty)$, let
\[
\begin{aligned}
J_{(F,G)}(k,k;\rho_1,\rho_2)
&= T_{k,\rho_2}\big((F*G)_{\rho_1}^{(g_1,g_2;h_1,h_2)} \big)(y)\\
&= E_{x_2}\big[(F*G)_{\rho_1}^{(g_1,g_2;h_1,h_2)}\big(y+\rho_2^{-1/2}\mathcal Z_{k}(x_2,\cdot)\big)\big]\\
&=E_{x_2}\big[ E_{x_1}\big[
F\big(\mathcal Z_{g_1}\big(y + \rho_2^{-1/2}\mathcal Z_{k}(x_2,\cdot)\big)
+\rho_1^{-1/2}\mathcal Z_{h_1}(x_1,\cdot) \big)\\
&\qquad\quad\quad\quad\times
G\big(\mathcal Z_{g_2}\big(y +\rho_2^{-1/2}\mathcal Z_{k}(x_2,\cdot)\big)
+ \rho_1^{-1/2}\mathcal Z_{h_2}(x_1,\cdot) \big)
\big]\big]\\
&=E_{x_2}\big[E_{x_1}
\big[F\big(\mathcal Z_{g_1}(y,\cdot) + \rho_2^{-1/2}\mathcal Z_{g_1k}(x_2,\cdot) 
+\rho_1^{-1/2}\mathcal Z_{h_1}(x_1,\cdot) \big)\\
&\qquad\quad\quad\quad\times
G\big(\mathcal Z_{g_2}(y,\cdot) +\rho_2^{-1/2}\mathcal Z_{g_2k}(x_2,\cdot) 
+\rho_1^{-1/2}\mathcal Z_{h_2}(x_1,\cdot) \big)
\big]\big],
\end{aligned}
\]
Also, let $J_{\lambda_1^*}^*(k;\lambda_2)$,  $\lambda_2\in \mathbb C_+$,
denote  the analytic continuation of $J_{\lambda_1^*}(k;\rho_2)$,
let $J_{\lambda_1^*}^*(k;\lambda_1)$, $\lambda_1\in \mathbb C_+$,
denote  the analytic continuation of   $J_{\lambda_2^*}(h_1;\rho_1)$,
and let $J_{(F,G)}^{**}(k,k;\cdot,\cdot)$ on   $\mathbb C_+\times \mathbb C_+$,
denote  the analytic continuation of $J_{(F,G)}(k,k;\rho_1,\rho_2)$.

\par
From the   assumptions in Lemma \ref{le:Fu-lemma},
one can see that the three analytic Wiener  integrals
$J_{\lambda_1^*}^*(k;\lambda_2)$,
$J_{\lambda_1^*}^*(k;\lambda_1$, and
$J_{(F,G)}^{**}(k,k;\lambda_1,\lambda_2)$
all exist, and
\begin{equation}\label{eq:three-continuation}
J_{\lambda_1^*}^*(k;\lambda_2)
=J_{\lambda_1^*}^*(k;\lambda_1)
=J_{(F,G)}^{**}(k,k;\lambda_1,\lambda_2)
\end{equation}
for all $(\lambda_1,\lambda_2)\in\mathbb C_+\times\mathbb C_+$.
\end{remark}

\begin{proof}[Proof of Lemma \ref{le:Fu-lemma}]
In view of equations \eqref{eq:anal-T} and \eqref{eq:gcp-Z}, 
we first note that the existences of the analytic Wiener integrals
\[
T_{\lambda,\mathbf{s}(g_1h_1,h_1)}(F)(y) ,\,\,  T_{\lambda,\mathbf{s}(g_2h_2,h_2)}(G)(y) ,\,\,
(F*G)_{\lambda}^{(g_1,g_2;h_1,h_2)}(y)
\]
and
\[  
T_{\lambda_2, k}((F*G)_{\lambda_1}^{(g_1,g_2;h_1,h_2)}) (y)
\] 
guarantee that the five analytic Wiener integrals
\begin{align*}
&\,\,\mbox{(i)}\,\,\, E_{x}\big[F\big(y+\lambda^{-1/2}\mathcal Z_{\mathbf{s}(g_1k,h_1)} (x,\cdot)\big)\big],\\
&\,\mbox{(ii)} \,\,\,E_{x}\big[F\big(y+\lambda^{-1/2}\mathcal Z_{\mathbf{s}(g_2k,h_2)} (x,\cdot)\big)\big],\\
&\mbox{(iii)} \,\,\,E_{x}\big[F\big(\mathcal Z_{g_1}(y,\cdot) + \lambda^{-1/2}\mathcal Z_{h_1}(x,\cdot)  \big) 
G\big(\mathcal Z_{g_2}(y,\cdot) +\lambda^{-1/2}\mathcal Z_{h_2}(x,\cdot) \big],\\
&\mbox{(iv)} \,\,\, E_{x_2}\big[E_{x_1}
\big[F\big(\mathcal Z_{g_1}(y,\cdot) + \lambda_2^{-1/2}\mathcal Z_{g_1k}(x_2,\cdot) 
+\lambda_1^{-1/2}\mathcal Z_{h_1}(x_1,\cdot) \big)\\
&\qquad\quad\quad\quad\times
G\big(\mathcal Z_{g_2}(y,\cdot) +\lambda_2^{-1/2}\mathcal Z_{g_2k}(x_2,\cdot) 
+\lambda_1^{-1/2}\mathcal Z_{h_2}(x_1,\cdot) \big)\big]\big],\\
\intertext{and}  
&\mbox{(v)} \,\,\, E_{x_2}\big[E_{x_1}^{\text{\rm an}_{\zeta_1}}
\big[F\big(\mathcal Z_{g_1}(y,\cdot) + \zeta_2^{-1/2}\mathcal Z_{g_1k}(x_2,\cdot) 
+\mathcal Z_{h_1}(x_1,\cdot) \big)\\
&\qquad\quad\quad\quad\times
G\big(\mathcal Z_{g_2}(y,\cdot) +\zeta_2^{-1/2}\mathcal Z_{g_2k}(x_2,\cdot) 
+\mathcal Z_{h_2}(x_1,\cdot) \big)\big]\big]
\end{align*}
all exist for any $\lambda>0, \lambda_1>0$, $\lambda_2>0$,
$\zeta_1\in \mathbb C_+$,   $\zeta_2>0$, and s-a.e. $y\in C_{0}[0,T]$.

\par
Next,  the existence of the analytic Wiener  integral
\begin{equation}\label{prod-add}
\begin{aligned}
&\mathfrak J (\lambda_1,\lambda_2)   
\equiv 
T_{\lambda_2, k}((F*G)_{\lambda_1}^{(g_1,g_2;h_1,h_2)}) 
\\&=
E_{x_2}^{\text{\rm an}w_{\lambda_2}}\big[E_{x_1}^{\text{\rm an}w_{\lambda_1}}
\big[F\big(\mathcal Z_{g_1}(y,\cdot) + \mathcal Z_{g_1k}(x_2,\cdot) 
+\mathcal Z_{h_1}(x_1,\cdot) \big)\\
&\qquad\quad\qquad\qquad\quad\times
G\big(\mathcal Z_{g_2}(y,\cdot) +\mathcal Z_{g_2k}(x_2,\cdot) 
+\mathcal Z_{h_2}(x_1,\cdot) \big)
\big]\big] 
\end{aligned}
\end{equation}
also ensure that the analytic Wiener integral
\[
\begin{aligned}
\mathfrak J (\lambda,\lambda)   
&=E_{x_2}^{\text{\rm an}w_{\lambda }}\big[E_{x_1}^{\text{\rm an}w_{\lambda }}
\big[F\big(\mathcal Z_{g_1}(y,\cdot) + \mathcal Z_{g_1k}(x_2,\cdot) 
+\mathcal Z_{h_1}(x_1,\cdot) \big)\\
&\qquad\quad\qquad\qquad\quad\times
G\big(\mathcal Z_{g_2}(y,\cdot) +\mathcal Z_{g_2k}(x_2,\cdot) 
+\mathcal Z_{h_2}(x_1,\cdot) \big)
\big]\big] 
\end{aligned}
\]
is well-defined for all $\lambda\in \mathbb C_+$.
In equation \eqref{prod-add} above, by the observation in Remark \ref{re:Fu-lemma},
$\mathfrak J (\lambda_1,\lambda_2)$
means the three analytic function space integrals
in equation \eqref{eq:three-continuation}.

\par
On the other hand, using the Fubini theorem  
and \eqref{eq:R-pwz4-1234}, 
it follows that for  all $\lambda>0$ and s-a.e. $y\in C_{0}[0,T]$,
\[
\begin{aligned}
&T_{\lambda, k}((F*G)_{\lambda}^{(g_1,g_2;h_1,h_2)}) (y)
\equiv \mathfrak J (\lambda,\lambda)   \\
&=E_{x_2}\big[E_{x_1}
\big[F\big(\mathcal Z_{g_1}(y,\cdot) + \lambda^{-1/2}\mathcal Z_{g_1k}(x_2,\cdot) 
+\lambda^{-1/2}\mathcal Z_{h_1}(x_1,\cdot) \big)\\
&\qquad\quad\quad\quad\times
G\big(\mathcal Z_{g_2}(y,\cdot) +\lambda^{-1/2}\mathcal Z_{g_2k}(x_2,\cdot) 
+\lambda^{-1/2}\mathcal Z_{h_2}(x_1,\cdot) \big)\big]\big]\\
&=E_{x_2}\Big[E_{x_1}\Big[
F\Big(\mathcal Z_{g_1}(y,\cdot) + \lambda^{-1/2}\big[\mathcal Z_{g_1k}(x_2,\cdot) 
+\mathcal Z_{h_1}(x_1,\cdot)\big] \Big)\\
&\qquad\quad\quad\quad\times
G\Big(\mathcal Z_{g_2}(y,\cdot) +\lambda^{-1/2}\big[\mathcal Z_{g_2k}(x_2,\cdot) 
+ \mathcal Z_{h_2}(x_1,\cdot)\big] \Big)\Big]\Big]\\
&=E_{x}\big[F\big(\mathcal Z_{g_1}(y,\cdot) + \lambda^{-1/2} \mathcal Z_{\mathbf{s}(g_1k,h_1)}(x,\cdot)\big)\big]\\ 
&\quad\times
E_{x}\big[G\big(\mathcal Z_{g_2}(y,\cdot) + \lambda^{-1/2} \mathcal Z_{\mathbf{s}(g_2k,h_2)}(x,\cdot)\big)\big]\\ 
%%%%%%%%%%%%%%%%%%%%%%%%%%%
&=T_{\lambda, \mathcal Z_{\mathbf{s}(g_1k ,h_1)}} (F)(\mathcal Z_{g_1}(y,\cdot))
T_{\lambda, \mathcal Z_{\mathbf{s}(g_2k ,h_2)}} (G)(\mathcal Z_{g_2}(y,\cdot)).
\end{aligned}
\]
We now use the analytic continuation
to obtain our desired conclusion.
\end{proof}

\begin{theorem}  \label{thm:p-main}
Let $g_1$, $g_2$, $h_1$, $h_2$, $k$,
$\mathbf{s}(g_1h_1,h_1)$ and $\mathbf{s}(g_2h_2,h_2)$
be as in Lemma \ref{le:Fu-lemma}. 
Let $q$ be a nonzero real number 
and let $F$ and $G$  be  $\mathbb C$-valued scale invariant measurable functionals
on $C_0[0,T]$ such that the $L_1$ analytic Feynman integrals
$T_{q,\mathbf{s}(g_1k,h_1)}(F)(y)$, $T_{q,\mathbf{s}(g_2k,h_2)}(G)(y)$
and $(F*G)_{q}^{(g_1,g_2;h_1,h_2)}(y)$
exist for  s-a.e. $y\in C_{0}[0,T]$.
Furthermore assume that given $k \in BV[0,T]\setminus\{0\}$,
the analytic FFT of $(F*G)_{q}^{(g_1,g_2;h_1,h_2)}$, 
$T_{q, k}((F*G)_{q}^{(g_1,g_2;h_1,h_2)})(y)$ 
exists for s-a.e. $y\in C_{0}[0,T]$. 
Suppose 
\[
g_1g_2k^2+h_1h_2=0.
\] 
Then  for s-a.e. $y\in C_{0}[0,T]$,
\begin{equation}\label{eq:Fubini-q}
\begin{aligned}
&T_{q, k}((F*G)_{q}^{(g_1,g_2;h_1,h_2)})(y)\\
&=T_{q,\mathbf{s}(g_1k,h_1)}(F)\big(\mathcal Z_{g_1}(y,\cdot)\big)
T_{q,\mathbf{s}(g_2k,h_2)}(G)\big(\mathcal Z_{g_2}(y,\cdot)\big).\\
%%
%%&\equiv T_{q,\mathbf{s}(g_1k,h_1)}^{g_1}(F)(y)
%%        T_{q,\mathbf{s}(g_2k,h_2)}^{g_2}(G)(y)\\
%%
\end{aligned}
\end{equation}
\end{theorem}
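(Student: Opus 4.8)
The plan is to deduce the Feynman-integral identity \eqref{eq:Fubini-q} from the analytic-Wiener-integral identity \eqref{eq:Fubini}, which was already established in Lemma \ref{le:Fu-lemma}, by passing to the limit $\lambda\to-iq$ through $\mathbb C_+$. First I would observe that all the standing hypotheses of Lemma \ref{le:Fu-lemma}---on $F$, $G$, the functions $g_1,g_2,h_1,h_2,k$, and the orthogonality relation $g_1g_2k^2+h_1h_2=0$---are in force here, so that for every $\lambda\in\mathbb C_+$ and for s-a.e. $y\in C_0[0,T]$,
\[
T_{\lambda,k}\big((F*G)_{\lambda}^{(g_1,g_2;h_1,h_2)}\big)(y)
=T_{\lambda,\mathbf{s}(g_1k,h_1)}(F)\big(\mathcal Z_{g_1}(y,\cdot)\big)\,
 T_{\lambda,\mathbf{s}(g_2k,h_2)}(G)\big(\mathcal Z_{g_2}(y,\cdot)\big).
\]
This is the workhorse identity; the theorem is precisely its boundary value as $\lambda$ approaches the imaginary axis.

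Next, recalling Definition \ref{def:fft} together with \eqref{eq:fft+F-int}, the analytic $\mathcal Z_k$-FFT with parameter $q$ is by definition $\lim_{\lambda\to-iq}T_{\lambda,k}$, and likewise for the two transforms on the right with kernels $\mathbf{s}(g_1k,h_1)$ and $\mathbf{s}(g_2k,h_2)$. The three existence hypotheses of the theorem guarantee that the left-hand side converges to $T_{q,k}((F*G)_{q}^{(g_1,g_2;h_1,h_2)})(y)$ and that each of the two factors on the right converges as $\lambda\to-iq$. Since the limit of a product of convergent quantities is the product of their limits, letting $\lambda\to-iq$ in the displayed identity yields \eqref{eq:Fubini-q} for s-a.e. $y$.

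The step I expect to require the most care is that the right-hand factors are assumed to exist only for a s-a.e. argument, whereas in the identity they are evaluated at the Gaussian-shifted arguments $\mathcal Z_{g_1}(y,\cdot)$ and $\mathcal Z_{g_2}(y,\cdot)$. Thus I would need to check that the scale-invariant null sets on which $T_{q,\mathbf{s}(g_jk,h_j)}$ might fail to exist pull back, under the measurable transforms $y\mapsto\mathcal Z_{g_j}(y,\cdot)$, to scale-invariant null sets; this should follow from the nondegeneracy of each Gaussian process $\mathcal Z_{g_j}$ together with the distributional identities of Section \ref{sec:rotation} (notably \eqref{eq:1-Dim}), which allow one to reroute negligibility through the kernel $\mathbf{s}(g_jk,h_j)$. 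Once this transfer of null sets is secured, the limiting argument is valid for s-a.e. $y$ and the proof is complete.
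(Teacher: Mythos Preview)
Your approach is the paper's: invoke Lemma~\ref{le:Fu-lemma} for all $\lambda\in\mathbb C_+$ and then send $\lambda\to-iq$. There is one step you pass over too quickly, however. By definition, $T_{q,k}\big((F*G)_q^{(g_1,g_2;h_1,h_2)}\big)(y)$ is an \emph{iterated} limit---first $\lambda_1\to-iq$ inside to form the Feynman convolution, then $\lambda_2\to-iq$ outside in the transform---whereas the quantity you actually take a limit of, $T_{\lambda,k}\big((F*G)_\lambda^{(g_1,g_2;h_1,h_2)}\big)(y)$, is the \emph{diagonal} of the two-parameter family. Your sentence ``the three existence hypotheses \ldots\ guarantee that the left-hand side converges to $T_{q,k}((F*G)_q^{\ldots})(y)$'' is precisely where this identification is needed and is not automatic. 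The paper handles it via Remarks~\ref{re:Fu-lemma} and~\ref{re:existece-comments}: the hypotheses force $(\lambda_1,\lambda_2)\mapsto T_{\lambda_2,k}\big((F*G)_{\lambda_1}^{(g_1,g_2;h_1,h_2)}\big)(y)$ to be analytic on $\mathbb C_+\times\mathbb C_+$ with all partial continuations agreeing, so the iterated and diagonal boundary values coincide. You should make this reduction explicit.

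Your closing paragraph on transferring scale-invariant null sets through $y\mapsto\mathcal Z_{g_j}(y,\cdot)$ raises a legitimate technical point that the paper itself does not address; it is orthogonal to the main argument, but your instinct to flag it is sound.
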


\begin{remark} \label{re:existece-comments}
(\textit{Comments on the assumptions in Theorem  \ref{thm:p-main}})

Before giving the proof of Theorem \ref{thm:p-main},
we will emphasize the following assertions:

\par
(i) The existence conditions for 
\[
T_{q,\mathbf{s}(g_1k,h_1)}(F) , \,\, T_{q,\mathbf{s}(g_2k,h_2)}(G) \,\,
\mbox{ and } \,\, (F*G)_{q}^{(g_1,g_2;h_1,h_2)} 
\] 
say  that  
\[
T_{\lambda,\mathbf{s}(g_1k,h_1)}(F)(y),\,\, 
T_{\lambda,\mathbf{s}(g_2k,h_2)}(G)(y) 
\,\, \mbox{ and  }
\,\, 
(F*G)_{\lambda}^{(g_1,g_2;h_1,h_2)}(y)
\]
all exist for  all $\lambda  \in \mathbb C_+$ and s-a.e. $y\in C_{0}[0,T]$.

\par
(ii) The existence conditions  for $(F*G)_{q}^{(g_1,g_2;h_1,h_2)}$
and $T_{q, k}((F*G)_{q}^{(g_1,g_2;h_1,h_2)})$ say  that 
\begin{itemize}
\item 
 $T_{\lambda, k}((F*G)_{q}^{(g_1,g_2;h_1,h_2)})(y)$
 exists for every $\lambda  \in \mathbb C_+$ and s-a.e. $y\in C_{0}[0,T]$; 
and 
\item  $T_{\lambda_2, k}((F*G)_{\lambda_1}^{(g_1,g_2;h_1,h_2)})(y)$
exists for every $(\lambda_1,\lambda_2) \in \mathbb C_+\times \mathbb C_+$
and s-a.e. $y\in C_{0}[0,T]$.
\end{itemize}  

\par
Thus the assumptions in Theorem \ref{thm:p-main} involve the assumptions in 
Lemma \ref{le:Fu-lemma}.
\end{remark}

%%%%%%%%%%%%%%%-------------------------------------------------------------[proof]
\begin{proof}[Proof of Theorem \ref{thm:p-main}]
To obtain equation \eqref{eq:Fubini-q}, one may establish that
\[
\begin{aligned}
&T_{q, k}((F*G)_{q}^{(g_1,g_2;h_1,h_2)})(y)\\
&=\lim_{\begin{subarray}{1}\lambda_2\to -iq \\
\lambda_2 \in \mathbb C_+ \end{subarray}}
E_{x_2}^{\text{\rm an}w_{\lambda_2}}
\big[(F*G)_{q}^{(g_1,g_2;h_1,h_2)} (y+\mathcal Z_{k}(x_2,\cdot))\big]\\
&=\lim_{\begin{subarray}{1}\lambda_1,\lambda_2\to -iq \\
\lambda_1,\lambda_2 \in \mathbb C_+\end{subarray}}
E_{x_2}^{\text{\rm an}w_{\lambda_2}}
\big[E_{x_1}^{\text{\rm an}w_{\lambda_1}}
\big[F\big(\mathcal Z_{g_1}(y,\cdot) +  \mathcal Z_{g_1k}(x_2,\cdot) 
 +\mathcal Z_{h_1}(x_1,\cdot) \big)\\
&\qquad\qquad\qquad\qquad\qquad\qquad\times
G\big(\mathcal Z_{g_2}(y,\cdot) +\mathcal Z_{g_2k}(x_2,\cdot) 
+ \mathcal Z_{h_2}(x_1,\cdot) \big)\big]\big],\\
&=\lim_{\begin{subarray}{1}\lambda\to -iq \\
\lambda \in \mathbb C_+\end{subarray}}
E_{x}^{\text{\rm an}w_{\lambda}}
\big[F\big(\mathcal Z_{g_1}(y,\cdot) +  \mathcal Z_{\mathbf{s}(g_1k,h_1)}(x,\cdot) \big)\big]\\
&\quad \times
 \lim_{\begin{subarray}{1}\lambda\to -iq \\
\lambda \in \mathbb C_+\end{subarray}}
E_{x}^{\text{\rm an}w_{\lambda}}
\big[G\big(\mathcal Z_{g_2}(y,\cdot) +  \mathcal Z_{\mathbf{s}(g_2k,h_2)}(x,\cdot) \big)\big]\\
&=T_{q,\mathbf{s}(g_1k,h_1)}(F)\big(\mathcal Z_g(y,\cdot)\big)
T_{q,\mathbf{s}(g_2k,h_2)}(G)\big(\mathcal Z_g(y,\cdot)\big).
\end{aligned}
\]
But, as shown in the proof of Lemma \ref{le:Fu-lemma},  the assertions 
in Remark \ref{re:existece-comments} that the  analytic  Wiener
integrals
\[
\begin{aligned}
T_{\lambda,\mathbf{s}(g_1k,h_1)}(F)(y)    %%\big(\mathcal Z_g(y,\cdot)\big)
&= E_{x}^{\text{\rm an}w_{\lambda}}
\big[F\big( y                             %%%\mathcal Z_{g_1}(y,\cdot) 
+  \mathcal Z_{\mathbf{s}(g_1k,h_1)}(x,\cdot) \big)\big]\\
T_{\lambda,\mathbf{s}(g_2k,h_2)}(G)(y)    %%\big(\mathcal Z_g(y,\cdot)\big) 
&=E_{x}^{\text{\rm an}w_{\lambda}}
\big[G\big(y                              %%%\mathcal Z_{g_2}(y,\cdot) 
+  \mathcal Z_{\mathbf{s}(g_2k,h_2)}(x,\cdot) \big)\big]\\
\end{aligned}
\]
and
\[
\begin{aligned}
&(F*G)_{\lambda}^{(g_1,g_2;h_1,h_2)}(y)\\
&=E_{x}^{\text{\rm an}w_{\lambda_1}}\big[
F\big(\mathcal Z_{g_1}(y,\cdot) +  \mathcal Z_{h_1}(x,\cdot)\big)
G\big(\mathcal Z_{g_2}(y,\cdot) +  \mathcal Z_{h_2}(x,\cdot) \big)\big]\big] 
\end{aligned}
\]
exist for  every $\lambda  \in \mathbb C_+$ and s-a.e. $y\in C_{0}[0,T]$,
and  the  analytic Wiener  integral
\[  
\begin{aligned}
&T_{\lambda_2, k}((F*G)_{\lambda_1}^{(g_1,g_2;h_1,h_2)}) (y)\\
&=E_{x_2}^{\text{\rm an}w_{\lambda_2}}\big[E_{x_1}^{\text{\rm an}_{\lambda_1}}
\big[F\big(\mathcal Z_{g_1}(y,\cdot) +  \mathcal Z_{g_1k}(x_2,\cdot) 
+\mathcal Z_{h_1}(x_1,\cdot) \big)\\
&\qquad\qquad\qquad\quad\times
G\big(\mathcal Z_{g_2}(y,\cdot) + \mathcal Z_{g_2k}(x_2,\cdot) 
+\mathcal Z_{h_2}(x_1,\cdot) \big)\big]\big]
\end{aligned}
\] 
exists for every 
$(\lambda_1,\lambda_2)\in\mathbb C_+\times\mathbb C_+$,
say  the fact that 
$T_{\lambda,\mathbf{s}(g_1k,h_1)}(F)(y)$ and $T_{\lambda,\mathbf{s}(g_2k,h_2)}(G)(y)$
are analytic on $\mathbb C_+$, as   functions of $\lambda$,
and $T_{\lambda_2, k}((F*G)_{\lambda_1}^{(g_1,g_2;h_1,h_2)}) (y)$
is  analytic on $\mathbb C_+\times\mathbb C_+$, as a  function of  $(\lambda_1,\lambda_2)$.
Thus, to establish equation \eqref{eq:Fubini-q},
it will suffice  to show that
\[
\begin{aligned}
&T_{q, k}((F*G)_{q}^{(g_1,g_2;h_1,h_2)})(y)\\
&=\lim_{\begin{subarray}{1}\lambda\to -iq \\
\lambda \in \mathbb C_+\end{subarray}}
E_{x_2}^{\text{\rm an}w_{\lambda}}
\big[E_{x_1}^{\text{\rm an}w_{\lambda}}
\big[F\big(\mathcal Z_{g_1}(y,\cdot) +  \mathcal Z_{g_1k}(x_2,\cdot) 
 +\mathcal Z_{h_1}(x_1,\cdot) \big)\\
&\qquad\qquad\qquad\qquad\qquad%%\qquad
\times
G\big(\mathcal Z_{g_2}(y,\cdot) +\mathcal Z_{g_2k}(x_2,\cdot) 
+ \mathcal Z_{h_2}(x_1,\cdot) \big)\big]\big],\\
&=\lim_{\begin{subarray}{1}\lambda\to -iq \\
\lambda \in \mathbb C_+\end{subarray}}
E_{x}^{\text{\rm an}w_{\lambda}}
\big[F\big(\mathcal Z_{g_1}(y,\cdot) +  \mathcal Z_{\mathbf{s}(g_1k,h_1)}(x,\cdot) \big)\big]\\
&\quad \times
 \lim_{\begin{subarray}{1}\lambda\to -iq \\
\lambda \in \mathbb C_+\end{subarray}}
E_{x}^{\text{\rm an}w_{\lambda}}
\big[G\big(\mathcal Z_{g_2}(y,\cdot) +  \mathcal Z_{\mathbf{s}(g_2k,h_2)}(x,\cdot) \big)\big]\\
&=T_{q,\mathbf{s}(g_1k,h_1)}(F)\big(\mathcal Z_{g_1}(y,\cdot)\big)
T_{q,\mathbf{s}(g_2k,h_2)}(G)\big(\mathcal Z_{g_2}(y,\cdot)\big).
\end{aligned}
\]
But it follows from equation \eqref{eq:Fubini} and the analytic continuation.
\end{proof}

%%%%%%%%%%%%%%%%%%%%%%%%%%%%%%%%%%%%%%%%%%%%%%%%%%%%%%%%%%%%%%%%%%
%%%--------------------[sub-section]---------------------------%%%
%%%%%%%%%%%%%%%%%%%%%%%%%%%%%%%%%%%%%%%%%%%%%%%%%%%%%%%%%%%%%%%%%%
\subsection{Convolution type operations of transforms}\label{sec:COP-FFT}

In our second theorem  we establish the fact that
the convolution type operation of the Fourier--Feynman transforms is 
a product of the Fourier--Feynman transforms.  

\begin{theorem} \label{thm:p-main-II}
Let $g_1$, $g_2$, $k_1$, $k_2$, $h_3$ and $h_4$ be nonzero functions in $BV[0,T]$.
Let $\mathbf{s}(h_3,k_1)$ and $\mathbf{s}(h_4,k_2)$ be given as in equation 
\eqref{eq:fn-rot}.
Also, let  $F$ and $G$  be $\mathbb C$-valued scale invariant measurable functionals on
$C_0[0,T]$ such that given a real $q\in \mathbb R\setminus\{0\}$, the  analytic FFTs on $C_0[0,T]$
such that $T_{q, k_1}(F)$, $T_{q, k_2}(G)$, 
%%%$T_{q,\mathbf{s}(g_1k,h_1)}^{g_1}(F)$
$T_{q,\mathbf{s}(g_1k,h_1)}(F)$ and
%%%%$T_{q,\mathbf{s}(g_2k,h_2)}^{g_2}(G)$
$T_{q,\mathbf{s}(g_2k,h_2)}(G)$
exist for  s-a.e. $y\in C_{0}[0,T]$.
Furthermore assume that the convolution type operation 
$(T_{q, k_1}(F)*T_{q, k_2}(G)_{\lambda_3})^{(g_1,g_2;h_3,h_4)}$
exists for s-a.e. $y\in C_{0}[0,T]$. 
Suppose 
\begin{equation}\label{support-condition}
m_L (\text{\rm supp}(h_3)\cap  \text{\rm supp}(h_4))=0.
\end{equation} 
Then  for given $q\in \mathbb R\setminus\{0\}$ and  s-a.e. $y\in C_{0}[0,T]$,
\begin{equation}\label{eq:Fubini-CT}
\begin{aligned}
&\big(T_{q, k_1}(F)*T_{q, k_2}(G)\big)_{q}^{(g_1,g_2;h_3,h_4)} (y)\\
&=T_{q,\mathbf{s}(h_3,k_1)}(F)\big(\mathcal Z_{g_1} (y,\cdot)\big)
  T_{q,\mathbf{s}(h_4,k_2)}(G)\big(\mathcal Z_{g_2} (y,\cdot)\big).\\
%%&=T_{q,\mathbf{s}(h_3,k_1)}^{g_1}(F)(y)
%%  T_{q,\mathbf{s}(h_4,k_2)}^{g_2}(G)(y).
\end{aligned}
\end{equation}
\end{theorem}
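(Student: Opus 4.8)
The plan is to run the same scheme as in Lemma \ref{le:Fu-lemma} and Theorem \ref{thm:p-main}, but with the two–dimensional rotation (Corollary \ref{theorem:2-Dim12}) replaced by its three–dimensional analogue, Corollary \ref{theorem:2-Dim12-II-coro}, whose hypothesis is exactly the support condition \eqref{support-condition}. First I would unwind all three definitions. Writing $\Phi=T_{q,k_1}(F)$ and $\Gamma=T_{q,k_2}(G)$ and applying Definition \ref{def:cp}, the left side of \eqref{eq:Fubini-CT} is an analytic Feynman integral (outer variable $x_1$, coefficients $h_3,h_4$) of a product in which $\Phi$ is itself an analytic Feynman integral in a variable $x_2$ with coefficient $k_1$ and $\Gamma$ an analytic Feynman integral in a variable $x_3$ with coefficient $k_2$. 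After introducing independent complex parameters for the outer convolution and the two inner transforms, the object becomes a three–parameter analytic function space integral over $C_0^3[0,T]$ in which $x_1$ is shared by the arguments of $F$ and $G$ (with coefficients $h_3$ and $h_4$) while $x_2$ and $x_3$ are private to $F$ and $G$ respectively, and $\mathcal Z_{g_1}(y,\cdot)$, $\mathcal Z_{g_2}(y,\cdot)$ enter only as fixed translations absorbed into $F$ and $G$.

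Next, exactly as in Remark \ref{re:Fu-lemma}, I would reduce to real parameters $\lambda>0$, where every integral is an ordinary Wiener integral and the Fubini theorem applies. The scaling identity coming from \eqref{eq:Z-bsic-p} lets me factor out the common $\lambda^{-1/2}$ and combine $\lambda^{-1/2}[\mathcal Z_{h_3}(x_1,\cdot)+\mathcal Z_{k_1}(x_2,\cdot)]$ inside $F$ and $\lambda^{-1/2}[\mathcal Z_{h_4}(x_1,\cdot)+\mathcal Z_{k_2}(x_3,\cdot)]$ inside $G$. Taking $(h_1,h_2,h_3,h_4)$ of Corollary \ref{theorem:2-Dim12-II-coro} to be $(h_3,k_1,h_4,k_2)$, its hypothesis $m_L(\text{\rm supp}(h_1)\cap\text{\rm supp}(h_3))=0$ becomes precisely \eqref{support-condition}, so \eqref{eq:R-pwz4-1234-II} applies and collapses the shared variable: the three–fold integral factors as
\[
E_x\big[F\big(\mathcal Z_{g_1}(y,\cdot)+\lambda^{-1/2}\mathcal Z_{\mathbf{s}(h_3,k_1)}(x,\cdot)\big)\big]
E_x\big[G\big(\mathcal Z_{g_2}(y,\cdot)+\lambda^{-1/2}\mathcal Z_{\mathbf{s}(h_4,k_2)}(x,\cdot)\big)\big],
\]
which is exactly $T_{\lambda,\mathbf{s}(h_3,k_1)}(F)(\mathcal Z_{g_1}(y,\cdot))\,T_{\lambda,\mathbf{s}(h_4,k_2)}(G)(\mathcal Z_{g_2}(y,\cdot))$ for $\lambda>0$.

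Finally, I would invoke analytic continuation in the parameters (using that, by the stated existence hypotheses, each relevant integral defines an analytic function of its parameter on $\mathbb C_+$) to promote the identity from $\lambda>0$ to all $\lambda\in\mathbb C_+$, and then let $\lambda\to -iq$ to reach \eqref{eq:Fubini-CT}. The main obstacle is the bookkeeping of this continuation: I must verify that the separate outer and inner continuations yield one and the same several–variable analytic function, so that evaluating on the diagonal of the parameters and then passing to the Feynman limit is legitimate. This is the analogue of the identity \eqref{eq:three-continuation} in Remark \ref{re:Fu-lemma}, and it is the step where the full force of the existence assumptions—rather than merely the rotation formula—is required.
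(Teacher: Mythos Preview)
Your proposal is correct and follows essentially the same route as the paper: reduce to real $\lambda>0$, expand the triple Wiener integral over $C_0^3[0,T]$, apply the three–dimensional rotation Corollary \ref{theorem:2-Dim12-II-coro} via the support condition \eqref{support-condition} to factor the integral, and then pass to $\lambda\in\mathbb C_+$ and finally to $-iq$ by analytic continuation. The paper handles your ``bookkeeping obstacle'' in exactly the way you anticipate, by listing the seven partial continuations $J_1,\dots,J_7$ and asserting (as in Remark \ref{re:Fu-lemma}) that they share a common analytic extension $J^*(\lambda_1,\lambda_2,\lambda_3)$ on $\mathbb C_+^3$.
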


%%%%%%%%%%%%%%%-------------------------------------------------------------[proof]
\begin{proof}%%[Proof of Lemma \ref{le:Fu-lemma-CT}]
By  similar arguments in Remark \ref{re:existece-comments} and \ref{re:Fu-lemma},
the following analytic continuation of the seven Wiener integrals
\[
\begin{aligned}
J_1(\rho_1,\rho_2,\rho_3) 
&=\big(T_{\rho_1, k_1}(F)*T_{\rho_2, k_2}(G)\big)_{\rho_3}^{(g_1,g_2;h_3,h_4)},\quad
   \rho_1,\rho_2,\rho_3 \in (0,+\infty)\\
%%%
J_2(\rho_2,\rho_3;\lambda_1) 
&=\big(T_{\lambda_1, k_1}(F)*T_{\rho_2, k_2}(G)\big)_{\rho_3}^{(g_1,g_2;h_3,h_4)},\quad
  \rho_2,\rho_3 \in  (0,+\infty),\,\lambda_1\in\mathbb C_+\\
%%%
J_3(\rho_1,\rho_3;\lambda_2) 
&=\big(T_{\rho_1, k_1}(F)*T_{\lambda_2, k_2}(G)\big)_{\rho_3}^{(g_1,g_2;h_3,h_4)},\quad
  \rho_1,\rho_3 \in  (0,+\infty),\,\lambda_2\in\mathbb C_+\\
%%%
J_4(\rho_1,\rho_2;\lambda_3) 
&=\big(T_{\rho_1, k_1}(F)*T_{\rho_2, k_2}(G)\big)_{\lambda_3}^{(g_1,g_2;h_3,h_4)},\quad
  \rho_1,\rho_2 \in  (0,+\infty),\,\lambda_3\in\mathbb C_+\\
%%%
J_5(\rho_3;\lambda_1,\lambda_2) 
&=\big(T_{\lambda_1, k_1}(F)*T_{\lambda_2, k_2}(G)\big)_{\rho_3}^{(g_1,g_2;h_3,h_4)},\quad
  \rho_3 \in  (0,+\infty),\,\lambda_1,\lambda_2\in\mathbb C_+\\
%%%
J_6(\rho_2;\lambda_1,\lambda_3) 
&=\big(T_{\lambda_1, k_1}(F)*T_{\rho_2, k_2}(G)\big)_{\lambda_3}^{(g_1,g_2;h_3,h_4)},\quad
 \rho_2 \in  (0,+\infty),\,\lambda_1,\lambda_3\in\mathbb C_+\\
%%%
\end{aligned}
\]
and
\[
\begin{aligned}
J_7(\rho_1;\lambda_2,\lambda_3) 
=\big(T_{\rho_1, k_1}(F)*T_{\lambda_2, k_2}(G)\big)_{\lambda_3}^{(g_1,g_2;h_3,h_4)},\quad
 \rho_1 \in  (0,+\infty),\,\lambda_2,\lambda_3\in\mathbb C_+\\
%%%
\end{aligned}
\]
all exist  and have the same analytic continuation (analytic Wiener integral on $C_0^3[0,T]$)
\[
J^*(\lambda_1,\lambda_2,\lambda_3) =\big(T_{\lambda_1, k_1}(F)*T_{\lambda_2, k_2}(G)\big)_{\lambda_3}^{(g_1,g_2;h_3,h_4)},\quad
 \lambda_1,\lambda_2,\lambda_3 \in \mathbb C_+.
\]
Thus, by similar arguments as in  the proofs of Lemma \ref{le:Fu-lemma}
and Theorem \ref{thm:p-main},  it will suffice to show  that 
equation \eqref{eq:Fubini-CT} holds for all $\lambda>0$ and s-a.e. $y\in C_0[0,T]$.

Using the Fubini theorem and   applying equation \eqref{eq:R-pwz4-1234-II}
with the condition \eqref{support-condition},
it follows that for  all $\lambda>0$ and s-a.e. $y\in C_{0}[0,T]$,
\[
\begin{aligned}
&\big(T_{\lambda, k_1}(F)*T_{\lambda, k_2}(G)\big)_{\lambda}^{(g_1,g_2;h_3,h_4)} (y)\\
&=E_{x_1}\big[T_{\lambda, k_1}(F)
\big(\mathcal Z_{g_1} (y,\cdot)+ \lambda^{-1/2}\mathcal Z_{h_3} (x_1,\cdot)\big) \\
&\qquad\quad\times
T_{\lambda, k_2}(G)
\big(\mathcal Z_{g_2} (y,\cdot)+ \lambda^{-1/2}\mathcal Z_{h_4} (x_1,\cdot)\big) \big]\\
&=E_{x_1}\big[E_{x_2}\big[ F 
\big(\mathcal Z_{g_1} (y,\cdot)+ \lambda^{-1/2}\mathcal Z_{h_3} (x_1,\cdot)+\lambda^{-1/2}\mathcal Z_{k_1} (x_2,\cdot)\big)\big] \\
&\qquad\quad\times
E_{x_3}\big[ G 
\big(\mathcal Z_{g_2} (y,\cdot)+ \lambda^{-1/2}\mathcal Z_{h_4} (x_1,\cdot)+\lambda^{-1/2}\mathcal Z_{k_2} (x_3,\cdot)\big)\big] \big]\\
&=E_{x_1}\Big[E_{x_2}\Big[E_{x_3}\Big[ F 
\Big(\mathcal Z_{g_1} (y,\cdot)+ \lambda^{-1/2}\big(\mathcal Z_{h_3} (x_1,\cdot)+ \mathcal Z_{k_1} (x_2,\cdot)\big)\Big)  \\
&\qquad\qquad\qquad\quad\times
  G \Big(  \mathcal Z_{g_2} (y,\cdot)+ \lambda^{-1/2}\big(\mathcal Z_{h_4} (x_1,\cdot)+ \mathcal Z_{k_2} (x_3,\cdot)\big)\Big]\Big]\Big]\\
&=E_{x} \big[ F \big(\mathcal Z_{g_1} (y,\cdot)+ \lambda^{-1/2} \mathcal Z_{\mathbf{s}(h_3,k_1)} (x ,\cdot)\big)\big]   \\
&\quad \times
  E_{x} \big[G \big(  \mathcal Z_{g_2} (y,\cdot)+\lambda^{-1/2} \mathcal Z_{\mathbf{s}(h_4,k_2)} (x ,\cdot)\big)\big]\\
&=T_{\lambda,\mathbf{s}(h_3,k_1)}(F)\big(\mathcal Z_{g_1} (y,\cdot)\big)
  T_{\lambda,\mathbf{s}(h_4,k_2)}(G)\big(\mathcal Z_{g_2} (y,\cdot)\big)\\
\end{aligned}
\]
as desired.
\end{proof}

%%%%%%%%%%%%%%%%%%%%%%%%%%%%%%%%%%%%%%%%%%%%%%%%%%%%%%%%%%%%%%%%%%
%%%%%%%%%%%%%%%%%%%%%%%%%%%%%%%%%%%%%%%%%%%%%%%%%%%%%%%%%%%%%%%%%%
%%%-----------------------[section]----------------------------%%%
%%%%%%%%%%%%%%%%%%%%%%%%%%%%%%%%%%%%%%%%%%%%%%%%%%%%%%%%%%%%%%%%%%
%%%%%%%%%%%%%%%%%%%%%%%%%%%%%%%%%%%%%%%%%%%%%%%%%%%%%%%%%%%%%%%%%%
\setcounter{equation}{0}
\section{Further result and Examples}\label{sec:example}

\par
The result in   Theorems  \ref{thm:p-main} and \ref{thm:p-main-II} above 
can be applied to  many large classes of functionals on $C_0[0,T]$.
These classes of functionals are appeared  in 
\cite{CS76-1,CS80,HPS95,HPS96,HPS97-1,JS79-II,PS98,PSS98}.
%%%
%%%\begin{itemize}
%%%\item[(a)] The Banach algebra $\mathcal S(L_2[0,T])$ defined 
%%% in  \cite{CS80}:  also see \cite{HPS96,PSS98}.
%%%\item[(b)] Various spaces of functionals of the form
%%%\[
%%%F(x)= f\Big(\int_0^T\alpha_1(t)d x(t),\ldots,\int_0^T\alpha_1(t)d x(t)\Big)
%%%\]
%%%for appropriate $f$ as discussed in \cite{HPS95,PS98}, where
%%%$\int_0^T\alpha (t)d x(t)$ denotes the Paley-Wiener-Zygmund stochastic integral.
%%%\item[(c)] Various spaces of functionals of the form
%%%\[
%%%F(x)=\exp\bigg\{\int_0^Tf(t,x(t))d t\bigg\}
%%%\]
%%%for appropriate $f:[0,T]\times \mathbb R \to \mathbb C$
%%%as discussed in \cite{CS76-1,HPS97-1,JS79-II}. 
%%%\end{itemize}
%%%Other classes of functionals on $C_0[0,T]$, see \cite{CS80,JS79-II,HPS96}.
%%%  

In Theorem \ref{thm:p-main}, we established that an analytic FFT with respect to a Gaussian process 
of a convolution type operation of functionals on $C_0[0,T]$ is a product of analytic FFTs 
with respect to a Gaussian processes,  and in Theorem \ref{thm:p-main-II}, 
we established that a  convolution type  operation of analytic FFTs with respect to a Gaussian processes 
is a product of analytic FFTs with respect to a Gaussian processes.

Here  we have the following question: how to relate the two results in Theorems 
\ref{thm:p-main} and \ref{thm:p-main-II}, i.e.,
how to find the conditions on the transforms and convolution type operations
in the next equation?
\begin{equation}\label{eq:p-main-I+II}
T_{q, k}((F*G)_{q}^{(g_1,g_2;h_1,h_2)})(y)
= \big(T_{q, k_1}(F)*T_{q, k_2}(G)\big)_{q}^{(g_1,g_2;h_3,h_4)} (y).
\end{equation}

In views of the assumptions in Theorems \ref{thm:p-main} and \ref{thm:p-main-II},
we must check that there exist solutions $\{g_1,g_2,k,k_1,k_2,h_1,h_2,h_3,h_4\}$ of the system
\begin{equation}\label{eq:system}
\begin{cases}
\,\, \mbox{(i)}   \,\,\, g_1g_2k^2+h_1h_2=0 \,\mbox{ in } L_2[0,T],\\
  \, \mbox{(ii)}    \,\, m_L (\text{\rm supp}(h_3)\cap  \text{\rm supp}(h_4))=0,\\
     \mbox{(iii)}   \,\, \mathbf{s}(g_1k,h_1)=\mathbf{s}(h_3,k_1) \,\mbox{ in } L_2[0,T], \\
         \qquad\qquad  \mbox{ i.e., }\, g_1^2(t)k^2(t)+h_1^2(t)=h_3^2(t)+k_1^2(t) \,\,m_L\mbox{-a.e.}\,\,\, t\in[0,T],\\
     \mbox{(iv)}   \,\,\mathbf{s}(g_2k,h_2)=\mathbf{s}(h_4,k_2) \,\mbox{ in } L_2[0,T], \\
         \qquad\qquad  \mbox{ i.e., }\, g_2^2(t)k^2(t)+h_2^2(t)=h_4^2(t)+k_2^2(t) \,\,m_L\mbox{-a.e.}\,\,\, t\in[0,T].
\end{cases}
\end{equation} 
to establish equation  \eqref{eq:p-main-I+II} above. 

Throughout the  remainder of this paper, we give  examples 
of the solution sets  of the system \eqref{eq:system}.

\begin{example}\label{ex1}
The set $\{g_1,g_2,k,k_1,k_2,h_1,h_2,h_3,h_4\}$ of functions  in $L_2[0,T]$ with 
\[
\begin{aligned}
g_1(t)&=2\cos\Big(\frac{2\pi t}{T}\Big)\chi_{[0,T/2]}(t),  
      & 
g_2(t)&=\Big[3-4\sin^2\Big(\frac{2\pi t}{T}\Big)\Big]\chi_{[T/2,T]}(t), \\
%---------
k(t)&=\sin\Big(\frac{2\pi t}{T}\Big), 
\quad 
 k_1(t)=\sin\Big(\frac{4\pi t}{T}\Big),&
 k_2(t)&=\sin\Big(\frac{6\pi t}{T}\Big),  \\
%---------
 h_1(t)&=\chi_{[T/2,T]}(t), 
&
 h_2(t)&=\chi_{[0,T/2]}(t),    \\
%---------
  h_3(t)&=\cos\Big(\frac{4\pi t}{T}\Big)\chi_{[T/2,T]}(t),   
&
 h_4(t)&=\cos\Big(\frac{6\pi t}{T}\Big)\chi_{[0,T/2]}(t), &\\
\end{aligned}
\]
is a solution of the system   \eqref{eq:system}.
The functions above are defined $m_L$-a.e. on $[0,T]$.
 \end{example}

\begin{example}\label{ex2}
Given positive integers  $l$, $m$ and $n$ with $l<m<n$,
let
\[
\begin{aligned}
g_1(t)&=\sin\Big(\frac{l\pi t}{T}\Big),  
\quad\,\,
g_2(t) =\sin\Big(\frac{m\pi t}{T}\Big), &
 &k(t)  =\cos\Big(\frac{n\pi t}{T}\Big),  &\\
 k_1(t)&=\sqrt2\sin\Big(\frac{l\pi t}{T}\Big)\cos\Big(\frac{n\pi t}{T}\Big)\chi_{B}(t),  
      & 
k_2(t)&=\sqrt2\sin\Big(\frac{m\pi t}{T}\Big)\cos\Big(\frac{n\pi t}{T}\Big)\chi_{B}(t),   \\
h_1(t)&=\sin\Big(\frac{l\pi t}{T}\Big)\cos\Big(\frac{n\pi t}{T}\Big),
      & 
h_2(t)& =-\sin\Big(\frac{m\pi t}{T}\Big)\cos\Big(\frac{n\pi t}{T}\Big),  \\
h_3(t)&=\sqrt2\sin\Big(\frac{l\pi t}{T}\Big)\cos\Big(\frac{n\pi t}{T}\Big)\chi_{A}(t),
      & 
h_4(t)& =\sqrt2\sin\Big(\frac{m\pi t}{T}\Big)\cos\Big(\frac{n\pi t}{T}\Big)\chi_{A}(t).  \\
\end{aligned}
\]
Then the set 
$\mathbf{S}= \{g_1,g_2,k,k_1,k_2,h_1,h_2,h_3,h_4\}$ is a solution of the system   \eqref{eq:system}.

In fact, the solution set  $\mathbf{S}$ can be obtained by the following procedures.
First, let $\{A,B\}$ be  a measurable partition of  $[0,T]$ with 
$m_L(A)>0$ and $m_L(B)>0$.
Next, given any functions $g_1$, $g_2$ and $k$ in $BV[0,T]$, 
let
\[
\begin{aligned}
h_1(t)&=g_1(t)k(t),
      & 
h_2(t)&=-g_2(t)k(t), \\
h_3(t)&=\sqrt2g_1(t)k(t)\chi_{A}(t),
      & 
h_4(t)&=\sqrt2g_2(t)k(t)\chi_{A}(t),  \\
k_1(t)&=\sqrt2g_1(t)k(t)\chi_{B}(t),
      & 
k_2(t)&=\sqrt2g_2(t)k(t)\chi_{B}(t). \\
\end{aligned}
\]
Then one can see that the set $\{g_1,g_2,k,h_1,h_2,h_3,h_4,k_1,k_2\}$ is a solution of the  system
\eqref{eq:system}.
 \end{example}

\begin{example}\label{ex3}
Let $\mathcal H=\{h_n \}_{n=1}^\infty$ be the sequence of Haar functions on $[0,T]$.
It is well-known that $\mathcal H$ is a complete orthonormal set on $L_2[0,T]$
which is consists of nonsmooth functions.

Consider the intervals  $A=[0,T/2]$ and $B=[T/2,T]$.
Then, for each $n\in \mathbb N$ with $n\ne 1$, either $\text{\rm supp}(h_n)\subset [0,T/2]$
or $\text{\rm supp}(h_n)\subset [T/2,T]$.

Let 
\[
P_1=\{n\in \mathbb N : \text{\rm supp}(h_n) \subset [0,T/2]\}
\]
and
\[
P_2=\{n\in \mathbb N : \text{\rm supp}(h_n)  \subset [T/2,T]\}.
\]
Then, clearly, 
\[
\bigcup_{n\in P_1} \text{\rm supp}(h_n) =[0,T/2]
\,\,\mbox{ and }\,\,
\bigcup_{n\in P_2} \text{\rm supp}(h_n) =[T/2,T].
\]
Also, let  
\[
\mathcal H^A = \{\chi_A\}\cup \{h_n/\sqrt2 : n\in P_1\}\equiv \{h_n^A\}_{n=1}^{\infty}
\]
and 
\[
\mathcal H^B = \{\chi_B\}\cup \{h_n/\sqrt2 : n\in P_2\}\equiv \{h_n^B\}_{n=1}^{\infty}.
\]
Then it follows that
\begin{itemize}
  \item[(i)] $\mathcal H^A$ is a complete orthonormal set  in $L_2(A)=L_2[0,T/2]$; and 
  \item[(ii)] $\mathcal H^B$ is a complete orthonormal set in  $L_2(B)=L_2[T/2,T]$.
\end{itemize}

As discussed in Example \ref{ex2} above, 
given $g_1$, $g_2$ and $k$ in $BV[0,T]$,  let
\begin{align*}
h_1(t)&=g_1(t)k(t)    & \mbox{ a.e. }\,t\in[0,T],\\
h_2(t)&=-g_2(t)k(t)   & \mbox{ a.e. }\,t\in[0,T].
\end{align*}
In these settings, for $j\in \{1,2\}$, let
\[
\sum_{n=1}^{\infty} \alpha_{n}^{(j)} h_n^A\equiv \sum_{n} \alpha_{n}^{(j)} h_n^A
\] 
be  the Fourier series  of $\sqrt2g_jk$ with respect to $\mathcal H^A$ on $[0,T/2]$,
and let
\[
\sum_{n=1}^{\infty} \beta_{n}^{(j)} h_n^B\equiv \sum_{n} \beta_{n}^{(j)} h_n^B
\]
be  the Fourier series of  $\sqrt2g_jk$ with respect to $\mathcal H^B$ on $[T/2,T]$.
Then, one can see that
\begin{itemize}
  \item[(i)] $g_1g_2(t)k^2(t)+h_1(t)h_2(t)=g_1g_2(t)k^2(t)-g_1g_2(t)k^2(t)=0$,
  \item[(ii)] $m_L(\text{\rm supp}(h_3)\cap  \text{\rm supp}(h_4)) =m_L(A\cap B)=0$, 
  \item[(iii)]
\[
\begin{aligned}
&g_1^2(t)k^2(t)+h_1^2(t)
 =2g_1^2(t)k^2(t)
=[\sqrt2g_1(t)k(t)]^2 \\
&=[\sqrt2g_1(t)k(t)\chi_A(t)   +\sqrt2g_1(t)k(t)\chi_B(t)]^2 \\
&= 2g_1^2(t)k^2(t)\chi_A(t)
  +2g_1^2(t)k^2(t)\chi_B(t) \\
&=\bigg(\sum_{n=1}^{\infty} \alpha_{n}^{(1)} h_n^A\bigg)^2(t)+\bigg(\sum_{n=1}^{\infty} \beta_{n}^{(1)} h_n^B \bigg)^2(t),
\end{aligned}
\]
 \item[(iv)]
\[
\begin{aligned}
&g_2^2(t)k^2(t)+h_2^2(t)
 =2g_2^2(t)k^2(t)
 =[\sqrt2g_2(t)k(t)]^2 \\
&=[\sqrt2g_2(t)k(t)\chi_A(t)   +\sqrt2g_1(t)k(t)\chi_B(t)]^2 \\
&= 2g_2^2(t)k^2(t)\chi_A(t)
  +2g_2^2(t)k^2(t)\chi_B(t) \\
&=\bigg(\sum_{n=1}^{\infty} \alpha_{n}^{(2)} h_n^A\bigg)^2(t)+\bigg(\sum_{n=1}^{\infty} \beta_{n}^{(2)} h_n^B \bigg)^2(t)
\end{aligned}
\]
\end{itemize}
for a.e. $t\in [0,T]$. Thus, given functions $g_1$, $g_2$ and $k$ in $BV[0,T]$, 
it follows that
\[
\begin{aligned}
&T_{q, k}((F*G)_{\lambda}^{(g_1,g_2;g_1k,-g_2k)})(y)\\
&=
\big(T_{q,\sum_n \beta_{n}^{(1)} h_n^B}(F)*
T_{q, \sum_n \alpha_{n}^{(2)} h_n^B}(G)\big)_{q}^{(g_1,g_2;
\sum_n \alpha_{n}^{(1)} h_n^A,\sum_n \beta_{n}^{(2)} h_n^A)}(y)
\end{aligned}
\]
for s-a.e. $y\in C_0[0,T]$.
\end{example}

%%%%%%%%%%%%%%%%%%%%%%%%%%%%%%%%%%%%%%%%%%%%%%%%%%%%%%%%%%%%%%%%%%
%%%%%%%%%%%%%%%%%%%%%%%%%%%%%%%%%%%%%%%%%%%%%%%%%%%%%%%%%%%%%%%%%%
%%%%%%%%%%%%%%%%%%%%%%%%%%%%%%%%%%%%%%%%%%%%%%%%%%%%%%%%%%%%%%%%%%
%%%%%%%%%%%%%%%%%%%%%%%%%%%%%%%%%%%%%%%%%%%%%%%%%%%%%%%%%%%%%%%%%%
%%%%%%%%%%%%%%%%%%%%%%%%%%%%%%%%%%%%%%%%%%%%%%%%%%%%%%%%%%%%%%%%%%
%%%%%%%%%%%%%%%%%%%%%%%%%%%%%%%%%%%%%%%%%%%%%%%%%%%%%%%%%%%%%%%%%%

\end{document}